\theoremstyle{plain}
 \newtheorem{theorem}{Theorem}[section]
 \newtheorem{proposition}{Proposition}[section]
 \newtheorem{conjecture}{Conjecture}[section]
\theoremstyle{definition}
 \newtheorem{definition}{Definition}[section]
\theoremstyle{remark}
 \newtheorem{remark}{Remark}[section]
 \numberwithin{equation}{section}
\newtheorem{example}{Example}[section]
\renewcommand{\leq}{\leqslant}
\renewcommand{\geq}{\geqslant}
\newcommand{\Ind}{\mathrm{Ind}}
\newcommand{\Syl}{\mathrm{Syl}}
\newcommand{\Mod}{\text{Mod}}
\newcommand{\Hom}{\mathrm{Hom}}
\newcommand{\ZZ}{\mathbb{Z}}
\newcommand{\GG}{\mathbb{G}}
\newcommand{\QQ}{\mathbb{Q}}
\newcommand{\NN}{\mathbb{N}}
\newcommand{\Uch}{\mathrm{Uch}}
\newcommand{\Arg}{\text{Arg}}
\title[Brou\'e's Conjecture for $\Omega^{+}_8(2)$]{An algorithmic approach to perverse derived equivalences  Brou\'e's Conjecture for $\Omega^{+}_8(2)$}
\author[Stefano Sannella]{\bfseries Stefano Sannella} 
\address{ 
Department of Mathematics \\ 
University of Kaiserslautern   \\ 
Germany}
\begin{document}

{\begin{flushleft}\baselineskip9pt\scriptsize

\end{flushleft}}
\vspace{18mm} \setcounter{page}{1} \thispagestyle{empty}

\vspace{3mm}

\begin{abstract}
Following Craven and Rouquier's method \cite{Dav} to tackle Brou\'e's abelian defect group conjecture, we present two algorithms implementing that procedure in the case of principal blocks of defect $D \cong C_{\ell} \times C_{\ell}$ for a prime $\ell$; the first describes a stable equivalence between $B_0(G)$ and $B_0(N_G(D))$, and the second tries to lift such a stable equivalence to a perverse derived equivalence. 
As an application, we show that Brou\'e's conjecture holds for the principal $5$-block of the simple group $\Omega^{+}_8(2)$.
\end{abstract}

\maketitle

\emph{Keywords}: Representation theory; Finite groups; Derived equivalences; Brou\'e's conjecture.

\section{Introduction} 

Let $G$ be a finite group and $\ell$ be a prime number. In representation theory, it is an open problem to determine how and why some aspects of the representation theory of $G$ are somehow controlled by the family of subgroups of the form $N_G(Q)$, where $Q$ is a non-trivial $\ell$-subgroup of $G$; such subgroups are called $\ell$-\emph{local}. In \cite{Br}, Michel Brou\'e conjectured the following structural connection, which would imply other notable local/global conjectures:

\begin{conjecture}\textbf{\emph{(Brou\'e's abelian defect group conjecture - 1990, \cite{Br})}}\label{fghj}
Let $G$ be a finite group and $\ell$ be a prime number. Let $B$ be an $\ell$-block of $G$ with abelian defect group $D$ and $b$ be the $\ell$-block of $N_G(D)$ corresponding to $B$ under the Brauer correspondence; then $B$ and $b$ are derived equivalent.
\end{conjecture}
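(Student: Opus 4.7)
The plan is to approach Conjecture \ref{fghj} in three stages: reduction to quasi-simple groups, construction of a stable equivalence between $B$ and $b$, and lifting of that stable equivalence to a derived equivalence of perverse type, following the Craven-Rouquier philosophy announced in the abstract.

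First, I would invoke a reduction-to-simple-groups mechanism (in the spirit of Sp\"ath's inductive conditions, together with the earlier work of Marcus and Harris-Kn\"orr). The aim is to reformulate the conjecture as an inductive statement: once a suitable ``Brou\'e-inductive condition'' is established for every quasi-simple group and every prime $\ell$, the general case will follow by a Clifford-theoretic assembly of local derived equivalences along the normal subgroup structure of $G$.

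Second, for a fixed quasi-simple $G$ and prime $\ell$ with abelian defect group $D$, I would construct a stable equivalence of Morita type $\Psi\colon B\to b$. For principal blocks this is standardly realised by the Scott module over $G\times N_G(D)$ attached to the diagonal subgroup $\Delta D$, provided the local Morita equivalences at every non-trivial subgroup of $D$ glue correctly; the first algorithm of the present paper is the concrete implementation of this step in the case $D\cong C_\ell\times C_\ell$. I would then try to lift $\Psi$ to a derived equivalence by selecting a perversity function $\pi\colon\Irr(B)\to\ZZ$ and a compatible bijection of simple modules, so that Rouquier's recipe assembles a two-sided tilting complex whose restriction to stable categories recovers $\Psi$; the second algorithm of the paper is the analogous implementation of this lifting step.

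To extend this machinery to arbitrary abelian defect, I would treat the families of quasi-simple groups separately: cyclic defect via Rickard-Linckelmann, symmetric and general linear groups via the $\mathfrak{sl}_2$-categorification of Chuang-Rouquier, sporadic and small-rank Lie-type groups by direct computation (as in the present paper for $\Omega_8^+(2)$), and the remaining finite groups of Lie type in non-defining characteristic by exploiting the $\ell$-adic cohomology of Deligne-Lusztig varieties. The hardest step will certainly be this last family: the expected character bijection is predicted by Deligne-Lusztig theory, but translating cohomological degrees into the integer values of a perversity function $\pi$ is currently understood only in a handful of situations. Bridging that gap, together with verifying the inductive condition uniformly across all quasi-simple groups, is the main obstruction to a complete proof of Brou\'e's conjecture; the present paper should be read as one more data point within this long case analysis.
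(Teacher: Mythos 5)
The statement you are trying to prove is Conjecture \ref{fghj}, which the paper states precisely as an open \emph{conjecture}, not as a theorem: the paper has no proof of it, and offers none. What the paper actually proves is Theorem \ref{mine}, the much more modest assertion that the conjecture (in Rickard's refined, splendid form) holds for the principal $5$-block of the single group $\Omega_8^+(2)$. So there is no ``paper's own proof'' for you to be compared against, and any purported proof of the full conjecture should be treated with great suspicion.

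Your proposal is, correctly understood, a research programme rather than a proof, and it contains genuine gaps at every stage. The most serious is the first step: no reduction theorem for Brou\'e's abelian defect group conjecture to quasi-simple groups is currently known. Unlike the McKay or Alperin weight conjectures, where Isaacs--Malle--Navarro and Sp\"ath-type reductions exist, derived equivalences do not descend or assemble along normal subgroup structure in a way that anyone has established in the required generality; the Harris--Kn\"orr and Marcus results give Clifford-theoretic compatibilities in special situations, not a wholesale reduction. Without that, the plan collapses at the outset. Even granting the reduction, the subsequent steps are themselves open: the construction of a stable equivalence of Morita type between $B$ and $b$ is not known to exist for arbitrary abelian defect groups (it relies on Proposition \ref{induc} and on local derived equivalences whose existence is exactly what one is trying to prove); the Chuang--Rouquier $\mathfrak{sl}_2$-categorification settles symmetric and general linear groups but not the other classical or exceptional types; and the Deligne--Lusztig route, as Section \ref{geomdm} of the paper emphasises, rests on Conjectures \ref{fgeom}, \ref{istheone} and \ref{conjpi}, all of which remain open. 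Your closing sentence acknowledges this honestly, but it also concedes that what precedes it is not a proof. The correct reading of the present paper is that it verifies one new instance of the conjecture algorithmically, and your ``proposal'' should be reframed as a survey of strategies rather than submitted as an argument.
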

A notable refinement of Brou\'e's conjecture was proposed by Rickard \cite{Rick}: 
\begin{conjecture}\textbf{\emph{(Brou\'e-Rickard)}}\label{splee}
With the same notation and under the same conditions stated in Brou\'e's abelian defect conjecture, there is a splendid equivalence between $B$ and $b$.
\end{conjecture}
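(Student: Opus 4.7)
The plan is to attack this refined conjecture in the setting the paper targets (principal blocks with abelian defect group $D \cong C_{\ell} \times C_{\ell}$), following the Craven--Rouquier strategy outlined in the abstract. First I would construct a stable equivalence of Morita type between $B = B_0(G)$ and $b = B_0(N_G(D))$ by means of a $(B,b)$-bimodule $M$ built from the Green correspondent of a projective bimodule over the diagonal subgroup $\Delta D$. Because $D$ is abelian and the blocks are principal, the first algorithm in the paper produces $M$ as a trivial-source (i.e.\ $p$-permutation) bimodule, so splendidness is built into the stable equivalence from the outset; the images $M \otimes_b S$ of the simple $b$-modules $S$ can be read off explicitly.

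Second, I would try to lift this stable equivalence to a derived equivalence of perverse type. The idea is to choose a perversity function $\pi : \Irr(b) \to \ZZ$ and iteratively replace each image $M \otimes_b S$ by a bounded complex $T_S$ of $p$-permutation $B$-modules whose cohomology is $M \otimes_b S$ in degree $0$ up to a twist controlled by $\pi(S)$. The second algorithm in the paper searches the (finite) space of perversity functions compatible with the decomposition matrix of $B$ and the characters in the block, outputting a candidate tilting complex $T = \bigoplus_S T_S$.

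Third, I would verify that $T$ is indeed a tilting complex, i.e.\ $\Hom_{D^b(B)}(T, T[i]) = 0$ for $i \neq 0$ and that $T$ generates $D^b(B)$ as a triangulated category, and that $\mathrm{End}_{D^b(B)}(T)$ is Morita equivalent to $b$; these are exactly the conditions that the output of the second algorithm is designed to check. Splendidness of the resulting derived equivalence then follows because each term of $T$ is a $p$-permutation $B$-module with vertex contained in a conjugate of $\Delta D$, inherited from $M$.

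The main obstacle will be the second step: there is no a priori guarantee that a perversity function lifting the stable equivalence exists, and even when one does, the combinatorial search may fail to terminate in reasonable time without strong restrictions on $\pi$. The delicate point is ensuring that the complexes $T_S$ assemble into a complex whose bimodule endomorphism ring matches $b$ rather than some proper subquotient; character-theoretic bookkeeping (the perfect isometry predicted by Brou\'e, compatibility of the $\ell$-modular decomposition matrices after twisting) is what keeps the search tractable. For the specific application to the principal $5$-block of $\Omega^{+}_8(2)$, where $D \cong C_5 \times C_5$, these checks are precisely what the algorithms automate, and the conjecture then reduces to running them and confirming the output.
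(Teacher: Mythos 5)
The statement you were asked to address is labelled a \emph{conjecture} in the paper, and it remains open: the paper neither proves it nor claims to. What the paper proves is only the special case stated in Theorem~\ref{mine}, namely that the Brou\'e--Rickard conjecture holds for the principal $5$-block of $\Omega^{+}_{8}(2)$. Your proposal reads as an attempted proof strategy for the general conjecture, which necessarily cannot succeed: as you yourself concede in the final paragraph, there is no guarantee that a suitable perversity function exists, and the entire geometric framework underpinning the choice of $\pi$ (Conjectures~\ref{istheone} and~\ref{conjpi}) is itself conjectural. A plan whose viability depends on further open conjectures is not a proof.

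Beyond this fundamental mismatch, a few technical points in your sketch differ from what the paper actually does. The stable equivalence is not obtained from a single Green-correspondent bimodule over $\Delta D$; it is Rouquier's gluing construction from \cite[\S 5.5]{Rouq}, which assembles a length-two complex $C$ of $(B_0(H),B_0(G))$-bimodules from the known derived equivalences at the local (cyclic-defect) level, using Proposition~\ref{induc}. The direction of the functor is $C \otimes_{kG} - : \underline{\text{mod}}(B_0(G)) \to \underline{\text{mod}}(B_0(H))$, not $M \otimes_b -$. The algorithm \texttt{PerverseEq} does not search over perversity functions; the function $\pi_{\kappa/d}$ is supplied as input, computed via the degree polynomials of unipotent characters as in~(\ref{perversity}), and what is found by trial and error is the bijection between $\mathcal{S}_{B_0(G)}$ and $\mathcal{S}_{B_0(H)}$. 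Finally, the paper does not verify tilting-complex axioms directly: it relies on Proposition~\ref{forestanera}, which says that if the complexes $X_T$ produced by \texttt{PerverseEq} agree, up to stable isomorphism, with the images $C\otimes_{kG}S$ produced by \texttt{FinalStabEq}, then a perverse (hence splendid) derived equivalence exists. That lifting criterion replaces the $\Hom$-vanishing and generation checks you describe.

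In short, you should distinguish between the global statement (an open conjecture, with no proof available) and the paper's contribution (a verification for one group via the Craven--Rouquier algorithmic machinery). Your outline, once corrected on the points above, is a fair description of the latter, but it does not and cannot establish the former.
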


Brou\'e's conjecture has been checked in many different cases, but we are still far from a complete solution. Some cases that have been proved include $\ell$-blocks of individual groups, for example some sporadic groups, as well as families of groups, like alternating and symmetric groups. In this article we show that:

\begin{theorem}\label{mine}
Rickard's refinement of Brou\'e's abelian defect group conjecture holds for the principal $5$-block of $\Omega^{+}_{8}(2)$. Moreover, the derived equivalence can be chosen to be a perverse equivalence. 
\end{theorem}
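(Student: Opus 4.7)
The plan is to apply the two algorithms developed in the body of the paper to the concrete case $G = \Omega^{+}_{8}(2)$ with $\ell = 5$. A Sylow $5$-subgroup $D$ of $G$ has order $25$ and is elementary abelian, so $D \cong C_5 \times C_5$ and the hypothesis of the algorithms is met. The preparatory step is to extract, from the ATLAS and from \texttt{GAP}, the structure of $N = N_G(D)$, the simple modules and decomposition matrices of the principal blocks $B_0(G)$ and $B_0(N)$, together with the Cartan data needed as input.

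I would then run the first algorithm to construct an explicit stable equivalence of Morita type between $B_0(N)$ and $B_0(G)$. Concretely this produces a bimodule --- a sum of relatively $D$-projective modules, corrected by projective summands --- inducing a bijection $S_i \leftrightarrow V_i$ between the simple $B_0(N)$-modules and certain indecomposable non-projective $B_0(G)$-modules, together with the composition structure of each $V_i$.

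Next, I would feed this stable equivalence into the second algorithm, which searches for a perversity function $\pi$ on $\Irr(B_0(N))$ such that iteratively replacing the $V_i$ by shifts of their Heller translates according to $\pi$ assembles into a two-sided tilting complex lifting the stable equivalence to a perverse derived equivalence in the sense of Chuang--Rouquier. Because all bimodules involved are built from $p$-permutation bimodules with vertex inside the diagonal copy of $D$, success of the algorithm automatically yields a splendid equivalence, which is precisely Rickard's refinement.

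I expect the main obstacle to be the combinatorial search in the second step: the set of candidate perversity functions is large, and for each candidate one must check that the resulting complexes have the correct cohomology and endomorphism algebra. The strategy is to prune the search using the $5$-decomposition matrix of $G$ to constrain the composition factors of the shifted complexes, and to exploit structural information (projective covers, Heller translates, extensions between the $V_i$) coming from the local side $B_0(N)$. Once a valid $\pi$ is produced, the verification that it defines a genuine perverse equivalence is a finite mechanical check.
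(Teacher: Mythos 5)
Your high-level plan — construct a stable equivalence between $B_0(G)$ and $B_0(N_G(D))$, then try to lift it to a perverse derived equivalence, with splendidness coming for free from the $\ell$-permutation nature of everything involved — is indeed the strategy of the paper. But two of the details you propose would not work as stated, and one crucial step is missing.

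First, you treat the perversity function $\pi$ as an unknown to be found by a combinatorial search, pruned by the decomposition matrix. That is not how the paper finds $\pi$, and a blind search is not feasible. The key input that makes this tractable is the \emph{geometric} form of Brou\'e's conjecture: for a group of Lie type, there is a conjectural closed formula $\pi_{\kappa/d}(\chi)=\tfrac{\kappa}{d}(a(f_\chi)+\deg f_\chi)+\phi_{\kappa/d}(f_\chi)$ built from the degree polynomials of the unipotent characters in $B_0(G)$ (Conjecture \ref{conjpi} / equation \eqref{perversity}), and the algorithm simply evaluates this formula ($\kappa/d=1/4$ for $\Omega_8^+(2)$ at $\ell=5$). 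What is found by trial and error, constrained by the perfect-isometry congruences \eqref{benny}, is only the \emph{bijection} $\mathcal{S}_{B_0(G)}\leftrightarrow\mathcal{S}_{B_0(H)}$, not $\pi$. Second, the algorithm \texttt{PerverseEq} does not proceed by "shifts of Heller translates of the $V_i$"; it builds each complex $X_T$ inductively from the injective hull of $T$ downward, at each stage cutting out the $J_r$-radical (the largest submodule with composition factors of $\pi$-value $\le r$), which is a genuinely different inductive mechanism.

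Third, and most importantly, you stop at "once a valid $\pi$ is produced, the verification is a finite mechanical check" without saying what the check is. The two algorithms run \emph{independently}: \texttt{FinalStabEq} computes the images $C\otimes_{kG}S_i$ under Rouquier's stable equivalence (without ever constructing the bimodule $C$, which is infeasible — only the degree $-1$ term is computed for each $S_i$), and \texttt{PerverseEq} produces candidate complexes $X_i$ from the perversity data. The actual proof is the comparison: one must verify, for each $i$, that $X_i$ and $C\otimes_{kG}S_i$ are stably isomorphic (in practice, that their degree $-1$ terms agree modulo projectives, since degree $0$ is the Green correspondent in both and the higher terms of $X_i$ are projective). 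Only then does Proposition \ref{forestanera} (Craven--Rouquier) allow one to conclude that the $X_i$ are images of the simples under a genuine perverse splendid equivalence. Without that matching step there is no theorem; a perversity function that merely produces well-formed complexes does not by itself yield a derived equivalence.
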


\vspace{1mm}

Our computational strategy is based on two algorithms: \texttt{FinalStabEq} and \texttt{PerverseEq}. Both algorithms are available online, together with the several sub-algorithms having a role inside these two main ones. In order to provide an introductory view of how this algorithmic approach works, and before going into the details, we can generically state that:

\begin{itemize}
\item \texttt{FinalStabEq} is determining the image of each simple $B_0(G)$-module $S$ under a stable equivalence $L : \underline{\text{mod}}(B_0(G))  \xrightarrow{\sim} \underline{\text{mod}}(B_0(H))$ which was originally constructed by Rouquier in \cite[\S 5.5]{Rouq}.
\item \texttt{PerverseEq} is returning, for each simple $B_0(H)$-module $T$, a complex $X_T$ of $B_0(H)$-modules; if those complexes fulfil the conditions of Proposition \ref{forestanera}, then they consist of the image of the simple $B_0(G)$-modules under a derived perverse equivalence.
\end{itemize}
As stated in Proposition \ref{forestanera}, we are able to deduce that there is a perverse derived equivalence between $B_0(G)$ and $B_0(H)$ if there is a bijection $S \xrightarrow{ \ \ 1 : 1 \ \ } T$ between simple $B_0(G)$ and $B_0(H)$-modules such that $L(S)$ and $X_T$ are isomorphic in the stable category. 
This procedure is based on a result in the theory of perverse equivalences in the setting of Brou\'e's abelian defect group conjecture developed by Rouquier and Chuang; the result in more generality can be found in \cite[\S 3]{Dav}. 

\subsection{Notation}

Every group is intended to be finite; $\ell$ will denote a prime number.
As usual in representation theory, we will denote by $(K,\mathcal{O},k)$ an $\ell$-modular system, i.e. $\mathcal{O}$ is a complete discrete valuation ring such that $\mathcal{O}/J(\mathcal{O}) \simeq k$ is a field of characteristic $\ell$ and $K$ is the field of fractions of $\mathcal{O}$. The ring $\mathcal{O}$ will always be assumed to be large enough for the group that we consider; we will also assume that $\mathcal{O}$ is an extension of $\ZZ_{\ell}$ so that $\QQ_{\ell} \subseteq K$. 

An $\ell$-local subgroup of $G$ will often be denoted by $H$, and in the last two sections we will always have $H=N_G(D)$, where $D \in \Syl_{\ell}(G)$. For a group $G$, we denote by $B_0(G)$ the principal $\ell$-block of $G$ over $k$; for a general $\ell$-block $B$ of $G$ over $k$, the Brauer correspondent block of $H$ will be denoted by $b$, whereas for principal blocks we will simply use $B_0(H)$. 

As for modules, given an $\ell$-block $B$, we will refer to a complete set of representatives of isomorphism classes of simple $B$-modules by $\mathcal{S}_B$. Every module is intended to be a left module. For a $kG$-module $M$, the restriction of $M$ down to $H$ will be denoted by $M_H$; for a $kH$-module $V$, the induction up to $G$ is $\Ind_H^G V$. For a module $M$, the projective cover of $M$ is denoted by $\mathcal{P}(M)$. The set of composition factors of $M$ is denoted by $\text{cpf}(M)$.

Finally, for a block $B$, $\Mod(B)$ is the category consisting of all $B$-modules and $\text{mod}(B)$ the subcategory of all finitely generated $B$-modules; we will denote by $\mathcal{D}(B)$ the derived category of $\text{mod}(B)$ of \emph{bounded} complexes. The stable category of $\text{mod}(B)$ will be denoted by $\underline{\text{mod}}(B)$. We will say that $B$ and $b$ are derived equivalent if $\mathcal{D}(B)$ and $\mathcal{D}(b)$ are equivalent categories, and stably equivalent if $\underline{\text{mod}}(B)$ and $\underline{\text{mod}}(b)$ are equivalent categories. Stable equivalences and derived equivalences are always regarded as equivalences of triangulated categories. 

\section{Strategy and algorithm}\label{saa}

\subsection{From stable to derived equivalences}

In the last thirty years, Brou\'e's Conjecture has been tackled by using a wide range of methods. Although a general and uniform solution seems far from being achieved, these efforts have provided results for several restricted cases. They include the following important one:
\begin{theorem}\emph{\cite{Link1, Rick1, Rouq1}}\label{fgeom}
Rickard's refinement of Brou\'e's abelian defect group conjecture holds when the defect group $D$ is cyclic.
\end{theorem}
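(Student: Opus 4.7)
The plan is to exploit the very rigid structure of blocks with cyclic defect, where the representation theory is controlled by combinatorial data (the Brauer tree), and thereby to reduce Brou\'e's conjecture in this case to a purely combinatorial statement about Brauer tree algebras.

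First I would invoke the classical structure theorem (Dade, Brauer, Green): any block $B$ with cyclic defect group $D$ is Morita equivalent over $\mathcal{O}$ to a Brauer tree algebra $A(\Gamma, m)$, where $\Gamma$ is a finite tree whose edges are labelled by the simple $B$-modules and $m$ records the multiplicity of the unique exceptional vertex. The Brauer correspondent $b$ has the same defect group and the same multiplicity $m$, and its Brauer tree is a \emph{star} with $e = |N_G(D) : C_G(D)|$ edges and the exceptional vertex at its centre. The theorem thereby reduces to constructing a derived equivalence between the two Brauer tree algebras $A(\Gamma, m)$ and $A(\mathrm{star}, m)$.

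Second, I would build this equivalence by iterating an elementary tilt. Choose a simple $A(\Gamma, m)$-module $S$ and replace its projective cover $\mathcal{P}(S)$ by the two-term complex $0 \to \mathcal{P}(S) \to \mathcal{P}(S') \to 0$, where $S'$ corresponds to an edge of $\Gamma$ adjacent to the edge of $S$. This is a tilting complex whose endomorphism algebra is again a Brauer tree algebra with the same numerical invariants but a combinatorially modified tree. A finite sequence of such tilts connects any $\Gamma$ to the star, and the composite tilting complex implements a derived equivalence between $A(\Gamma, m)$ and $A(\mathrm{star}, m)$; this is essentially Rickard's argument.

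Third, for Rickard's splendid refinement rather than only a derived equivalence, each elementary tilt must be realised at the level of $(B, b)$-bimodules by a complex of $\ell$-permutation modules whose indecomposable summands have vertex contained in the diagonal $\Delta D \leq G \times N_G(D)$ and trivial source. This is Rouquier's contribution: using the Green correspondence together with the explicit description of $\Gamma$ in terms of the local action of $N_G(D)$, every elementary tilt lifts to such a splendid two-term bimodule complex, and splendidness is preserved under composition. The main obstacle lies exactly here --- not in the Morita reduction or in the combinatorial tilting argument, both of which are classical, but in tracking the vertices and sources of every bimodule summand through the whole sequence of tilts, and verifying at each step that the local-to-global passage via the Brauer correspondence remains compatible with the tilt; once this is established for the elementary moves, the theorem follows by induction on the number of tilts required to straighten $\Gamma$ into the star.
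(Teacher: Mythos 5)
The paper does not prove this theorem; it simply cites Linckelmann, Rickard, and Rouquier, so there is no in-paper proof to compare against. Evaluating your sketch against the actual arguments in the cited literature: your first two steps correctly capture the derived-equivalence part. The reduction of a cyclic block $B$ to a Brauer tree algebra $A(\Gamma,m)$ via Dade's theorem, and the derived equivalence $A(\Gamma,m) \simeq A(\mathrm{star},m)$ by a sequence of two-term elementary tilts that fold the tree, is exactly Rickard's 1989 argument, and Linckelmann's contribution is lifting this from $k$ to $\mathcal{O}$. So far so good.

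The gap is in your third step, and it is not a small one. The elementary tilts you describe live between \emph{abstract} Brauer tree algebras $A(\Gamma,m) \to A(\Gamma',m) \to \cdots \to A(\mathrm{star},m)$; the intermediate algebras in this chain are not block algebras of any finite group, so the phrase ``realised at the level of $(B,b)$-bimodules by a complex of $\ell$-permutation modules'' does not make sense for them — splendidness is a property of bimodules over group algebras $kG$ and $kH$, not over arbitrary symmetric algebras. Moreover, the Morita equivalence $B \simeq A(\Gamma,m)$ coming from Dade's theorem is not itself splendid, so even if one could make each tilt ``$\ell$-permutation'' in some sense, transporting the composite tilting complex back through the Morita equivalences would destroy that property. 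Rouquier's actual proof avoids this by working directly with $B$ and $b$ from the start: the Green correspondence furnishes an $\ell$-permutation $(b,B)$-bimodule $M$ with diagonal vertex $\Delta D$ that already induces a stable equivalence of Morita type, and the content of the proof is to extend $M$ to an explicit two-sided tilting complex of $\ell$-permutation bimodules (guided by the combinatorics of both Brauer trees simultaneously) rather than to iterate one-sided tilts between abstract tree algebras. That is the step your proposal treats as routine bookkeeping (``tracking vertices and sources through the tilts''), but it is in fact the heart of the matter and requires a different construction than the one you outline.
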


In particular, the conjecture holds for $D \simeq C_{\ell}$. It is natural to ask if we can deduce something for the next step $D \simeq C_{\ell} \times C_{\ell}$, and especially if we can use the known case $D \simeq C_{\ell}$ together with an inductive strategy. Although we do not have a solution for the rank $2$ case in general, this approach was successfully performed in some situations (including the result of this article). Let us say something more about it. 

\vspace{2mm}

Let $Q$ be an $\ell$-subgroup of $G$. We recall that the Brauer map 
\begin{center}
$\text{Br}_Q : \text{Mod}(kG) \to \text{Mod}(k N_G(Q))$ 
\end{center}
is defined on the objects as $\text{Br}_Q : M \mapsto M^Q / (\sum_{R < Q} \text{Tr}_R^Q M^R)$, where $M^R$ is the set of points fixed by $R$ and the trace map $\text{Tr}_R^Q : M^R \to M^Q$ is defined as $\text{Tr}_R^Q(m):= \sum_{g \in Q/R} gm$, for $m \in M^R$. 

\vspace{1mm}

An interesting case of application for the Brauer map arises when $\text{Br}_Q$ is restricted to a specific class of modules, namely the $\ell$-\emph{permutation modules}. We recall that a $kG$-module $M$ is an $\ell$-permutation module if for every $\ell$-subgroup $Q$, there is a basis of $M$ which is invariant under $Q$. Equivalently, we can define $\ell$-permutation modules as direct sums of trivial source modules.

The specific case of the Brauer map acting on $\ell$-permutation modules is explicitly treated in \cite[\S 4.1]{Rouq}; in the following, the Brauer map will always be assumed to act on such class of modules, and this allows us to extend the action $\text{Br}_Q$ to complexes of ($\ell$-permutation) bimodules. Under such condition, the Brauer map becomes a functor. Following \cite{Rouq}, we say that a complex of $(B_0(H),B_0(G))$-bimodules $C$ is splendid if, regarded as an $k[H \times G]$-module, each indecomposable summand of each term of $C$ is an $\ell$-permutation module, and its vertices are contained in $\Delta D$. The property for a complex to be splendid is crucial in Proposition \ref{induc}, and is the core of Rickard's refinement of Brou\'e's conjecture, as stated in Conjecture \ref{splee}. We refer to \cite{Rick} for the detailed definition of splendid equivalence.  

\vspace{1mm}

The following result connects the existence of a derived equivalence at the local level and a stable equivalence at the global level:
\begin{proposition}\textbf{\emph{(Rouquier, \cite[Th. 5.6]{Rouq}}}\label{induc}
Let $C$ be a splendid complex of $(B_0(H),B_0(G))$-bimodules. The following two assertions are equivalent:
\begin{itemize}
\item $C \otimes _{kG} -$ induces a splendid stable equivalence between $B_0(G)$ and $B_0(H)$;
\item for every non-trivial subgroup $Q$ of $D$, the complex $\emph{Br}_{\Delta Q}(C)$ induces a splendid derived equivalence between $B_0(C_G(Q))$ and $B_0(C_H(Q))$,
\end{itemize}
where $\emph{Br}$ is the Brauer map extended to complexes of modules.
\end{proposition}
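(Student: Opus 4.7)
The plan is to rephrase both assertions in terms of cones of the standard adjunction morphisms. Set $C^{\vee} = \mathrm{Hom}_{kG}(C, kG)$, regarded as a complex of $(kG, kH)$-bimodules, and consider the canonical maps
$$\eta \colon C \otimes^{\mathbf{L}}_{kG} C^{\vee} \longrightarrow kH, \qquad \varepsilon \colon C^{\vee} \otimes^{\mathbf{L}}_{kH} C \longrightarrow kG,$$
with respective cones $E_H$ and $E_G$. Standard Rickard-style criteria say that $C \otimes_{kG} -$ induces a derived equivalence iff $E_H$ and $E_G$ are acyclic, and a stable equivalence iff they are homotopy equivalent to bounded complexes of projective bimodules. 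Since $C$ is splendid and $kG$, $kH$ are themselves $\ell$-permutation bimodules with vertex in $\Delta D$, the cones $E_H$ and $E_G$ are likewise bounded complexes of $\ell$-permutation bimodules with vertex contained in $\Delta D$.

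Two structural properties of the Brauer functor on such bimodules are then exploited. First, for a non-trivial $\ell$-subgroup $Q \leqslant D$, $\mathrm{Br}_{\Delta Q}$ is monoidal on $\ell$-permutation bimodules with vertex in $\Delta D$, so $\mathrm{Br}_{\Delta Q}(M \otimes N) \cong \mathrm{Br}_{\Delta Q}(M) \otimes \mathrm{Br}_{\Delta Q}(N)$; moreover $\mathrm{Br}_{\Delta Q}(kG) = kC_G(Q)$ and $\mathrm{Br}_{\Delta Q}(kH) = kC_H(Q)$ (compatible with principal blocks via Brauer's third main theorem), while any projective $k[H \times G^{\mathrm{op}}]$-bimodule has trivial vertex and is therefore killed by $\mathrm{Br}_{\Delta Q}$. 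Second, a bounded complex $X$ of $\ell$-permutation $k[H \times G^{\mathrm{op}}]$-modules with vertex in $\Delta D$ is homotopy equivalent to a complex of projective bimodules if and only if $\mathrm{Br}_{\Delta Q}(X)$ is acyclic for every non-trivial $Q \leqslant D$. This second fact is the deep local-to-global principle for splendid complexes.

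Combining these, the forward direction is immediate: if $E_H$ and $E_G$ are homotopic to projective bimodule complexes, applying $\mathrm{Br}_{\Delta Q}$ and invoking the monoidality above yields $\mathrm{Br}_{\Delta Q}(C) \otimes^{\mathbf{L}} \mathrm{Br}_{\Delta Q}(C)^{\vee} \simeq kC_H(Q)$ and the analogous statement for the other composition, so each $\mathrm{Br}_{\Delta Q}(C)$ induces a derived equivalence, splendid because $\mathrm{Br}_{\Delta Q}$ preserves the vertex condition. For the converse, the hypothesis gives $\mathrm{Br}_{\Delta Q}(E_H) \simeq 0 \simeq \mathrm{Br}_{\Delta Q}(E_G)$ for every non-trivial $Q \leqslant D$, and the local-to-global principle then forces $E_H$ and $E_G$ to be homotopic to complexes of projective bimodules, so that $C \otimes_{kG} -$ induces a splendid stable equivalence. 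The main obstacle is precisely the second pillar above: extracting global projectivity from purely local Brauer vanishing. This is where the splendid hypothesis on $C$ is indispensable, since only under this condition do $E_H$ and $E_G$ belong to the class of complexes for which the local-to-global criterion applies.
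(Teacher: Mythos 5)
The paper supplies no proof of Proposition \ref{induc}; it is quoted directly from Rouquier, so there is no internal argument here to compare your attempt against. Measured against Rouquier's own argument in \cite[Th.~5.6]{Rouq}, your sketch reproduces the correct architecture: reduce to the projectivity/contractibility of the cones of the adjunction morphisms, observe that these cones are bounded splendid complexes of $(kH,kH)$- and $(kG,kG)$-bimodules with vertices in $\Delta D$, and then transfer the problem to the local Brauer quotients. The two pillars you single out --- the compatibility $\mathrm{Br}_{\Delta Q}(M\otimes_{kG}N)\cong\mathrm{Br}_{\Delta Q}(M)\otimes_{kC_G(Q)}\mathrm{Br}_{\Delta Q}(N)$ for $\ell$-permutation bimodules with diagonal vertices (together with $\mathrm{Br}_{\Delta Q}(kG)\cong kC_G(Q)$ and the vanishing on projectives), and the local-to-global criterion that a bounded complex of $\ell$-permutation bimodules with vertices in $\Delta D$ is homotopy equivalent to a complex of projectives iff all $\mathrm{Br}_{\Delta Q}$ for nontrivial $Q\le D$ annihilate it --- are exactly the ingredients used there, and your identification of the second pillar as the deep step (and as the place where splendidness is essential) is accurate.

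Two points worth tightening. First, check the adjunction conventions: for a $(kH,kG)$-bimodule complex $C$, only one of the unit/counit pair has the form $C\otimes^{\mathbf L}_{kG}C^{\vee}\to kH$; the other is $kG\to C^{\vee}\otimes^{\mathbf L}_{kH}C$. Because group algebras are symmetric, $C^{\vee}$ is simultaneously a left and a right adjoint, so both directions are available and the argument is unaffected, but the labels should be consistent, and you should also record that $\mathrm{Br}_{\Delta Q}(C^{\vee})\cong \mathrm{Br}_{\Delta Q}(C)^{\vee}$, which is what lets you run monoidality on both compositions. Second, in the local-to-global step one really wants contractibility of $\mathrm{Br}_{\Delta Q}(E_H)$ and $\mathrm{Br}_{\Delta Q}(E_G)$, not merely acyclicity; these coincide in your setting by a downward induction on vertices (each local complex is again a bounded complex of $\ell$-permutation modules whose higher Brauer quotients vanish, hence is homotopy equivalent to a complex of projectives, and bounded acyclic complexes of projectives are contractible). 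Making that induction explicit is advisable, since it is precisely the mechanism by which the hypothesis ``splendid'' does its work.
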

\begin{remark}
In this result and in the rest of the article, we usually consider the stable category as a quotient of the derived category: in \cite{Rick1}, Rickard shows the existence of an equivalence 
$\mathcal{D}(B) / \mathcal{K}(\text{proj-} B) \to \underline{\text{mod}}(B)$, where $\mathcal{K}(\text{proj-} B)$ is the bounded homotopy category of $\text{proj-} B$; therefore suitable complexes can induce equivalences at the level of the stable category. This equivalence is also used to show that a derived equivalence induces a stable equivalence. 
\end{remark}

\begin{remark}
In general, a derived equivalence between blocks over $\mathcal{O}$ implies a derived equivalence between the same blocks over $k$, but the converse is not true; however, this is true for splendid derived equivalences. This property is especially useful in our computational procedure, since calculation are run over fields rather than over $\mathcal{O}$. Therefore, in the following we will always let $\ell$-blocks to be intended over $k$. 
\end{remark}

Still in \cite[\S 5.5]{Rouq}, whenever $D \cong C_{\ell} \times C_{\ell}$, Rouquier applies this result to build a complex $C$ of $(B_0(H),B_0(G))$-bimodules inducing a stable equivalence. The strategy consists of building complexes of $kN_{H \times G}( \Delta Q)$-modules such that the restriction to $C_{H}(Q) \times C_G(Q)$, seen as a $(kC_{H}(Q),kC_G(Q))$-bimodule, induces a derived equivalence between $B_0(C_G(Q))$ and $B_0(C_H(Q))$, for each conjugacy class of non-trivial $Q < D$. The construction of such modules relies on the knowledge of the derived equivalence when the defect group is cyclic.

In particular, this strategy applies favourably for $\ell=2$, and it is used to prove Brou\'e's conjecture in general when $D \cong C_2 \times C_2$ (again, in \cite[\S 6.1.1]{Rouq}). When $\ell$ is odd (in our case $\ell=5$), such a general result to lift the stable equivalence induced by $C$ to a derived equivalence does not work, but the construction of the stable equivalence still holds and lifting this particular stable equivalence to a derived equivalence can be attempted case by case. 

In our computational setting, we mostly care about the image in $\mathcal{D}(B_0(H))$ of the stable equivalence induced by the complex $C$; in particular, we will compute the image of each simple module $S \in \mathcal{S}_{B_0(G)}$ under this stable equivalence and we will compare it with the output of the perverse equivalence algorithm corresponding to $S$ (namely, the image of $S$ under the supposed perverse equivalence). Together with the construction of $C$ in \cite[\S 5.5]{Rouq}, in \cite[\S 3.3.1]{Dav} we have an explicit construction of what the image $C \otimes_{kG} S$ is as a complex of $kH$-modules: this is the object that we need to know, and that we will physically build in our algorithm \texttt{FinalStabEq}. More precisely, this complex has length $2$; the module in degree $0$ being given by the Green correspondence (that we already have, in most cases), the algorithm will actually build the module in degree $-1$. In Section \ref{decc} we focus on how the construction of each image $C \otimes_{kG} S$ is performed.

\subsection{The algorithm \texttt{FinalStabEq}}\label{decc} In this section we recall the actual construction of the image under the splendid stable equivalence $L:=C \otimes _{kG} -$ between $B_0(G)$ and $B_0(H)$ descending from Proposition \ref{induc}; this construction can be found in \cite[\S 3.3.2]{Dav}. In other words, we give a simplified expression of the complexes of $B_0(H)$-modules:
\begin{equation}
\{ C \otimes _{kG} S \mid S \in \mathcal{S}_{B_0(G)} \},
\end{equation}
with the aim to construct them in MAGMA \cite{Mag}; the actual construction of the complex of $(B_0(H),B_0(G))$-bimodules $C$ is not computationally feasible, and in fact it is not necessary for our method, for we are interested in the set of complexes of $B_0(H)$-modules $C \otimes _{kG} S$ above only. 

\vspace{2mm}

Let us fix a subgroup $Q < D \in \Syl_{\ell}(G)$ of order $\ell$. We are still assuming that $D \cong C_{\ell} \times C_{\ell}$. In the following, we will assume that there exist $\bar{N}_G(Q)$ and $\bar{N}_H(Q)$ which denote complements of $Q$ in $N_G(Q)$ and $N_H(Q)$ respectively; those complements exist for the case that we will consider. In particular, we choose them such that $\bar{N}_H(Q) \leq \bar{N}_G(Q)$. Finally, let us consider $\bar{C}_H(Q)=C_H(Q) \cap \bar{N}_H(Q)$ and $\bar{C}_G(Q)=C_G(Q) \cap \bar{N}_G(Q)$; therefore, both $\bar{C}_G(Q)$ and $\bar{C}_H(Q)$ have cyclic Sylow $\ell$-subgroups, and we have a derived equivalence between their principal $\ell$-blocks.

We set $N_{\Delta}:= (\bar{C}_H(Q) \times \bar{C}_G(Q)^{\text{opp}}) \Delta \bar{N}_H(Q)$; this group acts on $\bar{C}_G(Q)$ and then we can consider $k\bar{C}_G(Q)$ as a $k N_{\Delta}$-module as well as a $\bar{C}_H(Q) \times \bar{C}_G(Q)^{\text{opp}}$-module. Let $e_{\bar{C}_H(Q)}$ and $e_{\bar{C}_G(Q)}$ denote the block idempotents corresponding to the principal blocks of $\bar{C}_H(Q)$ and $\bar{C}_G(Q)$ respectively. 
As $k N_{\Delta}$-module we have that
\begin{equation}\label{propro}
e_{\bar{C}_H(Q)} k \bar{C}_G(Q) e_{\bar{C}_G(Q)} =M _Q \oplus P,
\end{equation}
where $M_Q$ is indecomposable as a $\bar{C}_H(Q) \times \bar{C}_G(Q)^{\text{opp}}$-module and induces a stable equivalence (Rouquier, \cite[\S 5.5]{Rouq} or \cite[\S 3.3.1]{Dav}), whereas $P$ is projective. We have a precise description of what a projective cover of $M_Q$ is isomorphic to. Let $V \in \mathcal{S}_{B_0(\bar{C}_G(Q))}$. We consider the map $\gamma : \mathcal{S}_{B_0(\bar{C}_G(Q))} \to \mathcal{S}_{B_0(\bar{C}_H(Q))}$, where $\gamma(V)$ is defined to be the unique simple $B_0(\bar{C}_H(Q))$-module such that 
\begin{equation*}
\underline{\Hom}(V_{\bar{C}_H(Q)}, \gamma(V)) \neq \{0\}.
\end{equation*}

A projective cover of $M_Q$ as a $\bar{C}_H(Q) \times \bar{C}_G(Q)^{\text{opp}}$-module is of the form 
\begin{equation}\label{penn}
\mathcal{P}(M_Q) \cong \bigoplus_{V \in \mathcal{S}_{B_0(\bar{C}_G(Q))}} \mathcal{P}(\gamma(V)) \otimes \mathcal{P}(V)^{*},
\end{equation}
where each summand $\mathcal{P}(\gamma(V)) \otimes \mathcal{P}(V)^{*}$ gains the natural structure of  $\bar{C}_H(Q) \times \bar{C}_G(Q)^{\text{opp}}$-module. Finally, we define the subset $\mathcal{E} \subset \mathcal{S}_{B_0(\bar{C}_G(Q))}$ of all modules whose corresponding edge in the Brauer tree of $B_0(\bar{C}_G(Q))$ has distance $d+1 \ (\text{mod} \ 2)$ from the exceptional vertex, where $d$ is the distance of the trivial module from the exceptional vertex. In other words, depending on $d$, we consider the set of edges whose distance from the exceptional node is even or odd. We now define $U_Q :=\bigoplus_{V \in \mathcal{E}} \mathcal{P}(\gamma(V)) \otimes \mathcal{P}(V)^{*}$; again by \cite[\S 3.3.1]{Dav}, it is possible to extend the action of $\bar{C}_H(Q) \times \bar{C}_G(Q)^{\text{opp}}$ up to $N_{\Delta}$; with an abuse of notation, we will see $U_Q$ as a $k N_{\Delta}$-module from now on. We define $T_Q:=U_Q \oplus P$, where $P$ is the projective $k N_{\Delta}$-module appearing in the decomposition (\ref{propro}). 
\begin{remark}
The module $M_Q$ is what we are building in a sub-algorithm \texttt{StableEqSetup}, together with all the necessary groups and subgroups involved, such as $\bar{C}_H(Q), \bar{C}_G(Q), N_{\Delta}$; the module $T_Q$ is built manually case by case, since the construction depends on the Brauer tree of $\bar{C}_G(Q)$; $T_Q$ will be given as an input to the algorithm \texttt{FinalStabEq}.
\end{remark}
It remains to explain how to use these objects to get the complex of $kH$-modules $C \otimes_{kG} S$. The tensor product $T_Q \otimes_{k \bar{C}_G(Q)} S_{N_G(Q)}$ gains the structure of $N_{\Delta} \times N_G(Q)$-module, and in particular we will regard it as a $N_H(Q)$-module: the copy of $N_H(Q)$ inside $N_{\Delta} \times N_G(Q)$ that we consider is defined by the bijection $h \to ((\bar{h}, \bar{h}^{-1}),h)$, where $\bar{h}$ is the image of $h$ in $\bar{N}_H(Q)$; in our algorithm, $\bar{N}_H(Q)$ is constructed as a subgroup of $N_H(Q)$ such that $N_H(Q) =  Q \rtimes  \bar{N}_H(Q)$ rather than as a quotient, and therefore $\bar{h}$ will have to be defined as the element such that $h \cdot \bar{h}^{-1} \in Q$. Regarding $T_Q \otimes_{k \bar{C}_G(Q)} S_{N_G(Q)}$ as a $k N_H(Q)$-module, we can finally give the expression for $C \otimes_{kG} S$:

\begin{equation}\label{finalC}
C \otimes_{kG} S \cong  (0 \to e_{H} \bigoplus_{Q < D} \Ind_{N_H(Q)}^H (T_Q \otimes_{k \bar{C}_G(Q)} S_{N_G(Q)}) \to e_{H} S_H \to 0),
\end{equation}
where $Q$ runs over all the $H$-conjugacy classes of subgroups of $D$ of order $\ell$ and $e_H$ denotes the principal block idempotent of $kH$.

\begin{remark}\label{thesamm}
In the construction of $C \otimes_{kG} S$ above, as an object in the stable category the module in degree $0$ consists of the Green correspondent of $S$ together with the relatively projective summands occurring in the correspondence. Again by \cite[\S 3.3.1]{Dav}, it is possible to construct a complex $C'$ which is homotopy equivalent to $C$ and such that $C' \otimes_{kG} S$ has the Green correspondent only as a term of degree $0$. In particular, the term of degree $-1$ in $C'$ is constructed as for the one of $C$, but $T_Q$ is replaced by $U_Q$. 
\end{remark}

\subsection{Perverse equivalences}

In recent years, the theory of perverse equivalences has been successfully applied to gain some progress in the study of Brou\'e's conjecture, especially in the case of finite groups of Lie type in non-defining characteristic. In this section we present an algorithm which attempts to produce perverse derived equivalences for principal blocks whose defect group is elementary abelian of rank $2$; these derived equivalences are actually splendid derived equivalences. The algorithmic approach to perverse equivalences has already been used in \cite[\S 5, \S 6]{Dav} to produce perverse equivalences for some groups of Lie type as well as some sporadic groups. 
\begin{definition}{\textbf{(Perverse equivalence)}}\label{defperve}
Let $A_1,A_2$ be $k$-algebras and $F: \mathcal{D}(A_1) \to \mathcal{D}(A_2)$ be a derived equivalence. Let us denote by $S_1, \dots, S_n$ and $T_1, \dots, T_n$ as representatives of the isomorphism classes of the simple $A_1$-modules and $A_2$-modules respectively and let $\pi : \{1, \dots, n \} \to \ZZ_{\geq 0}$ be a function. We say that $F$ is a \emph{perverse equivalence} with perversity function $\pi$ with respect to the bijection between $\mathcal{S}_{A_1}$ and $\mathcal{S}_{A_2}$ sending $S_i$ to $T_i$ if for every $i  \in  \{1, \dots, n \}$ the modules occurring as composition factors of $H^{-j}(F(S_i))$ are $T_{\alpha}$ such that $\pi(\alpha) < j \leq \pi(i)$, and a single copy of $T_i$ if $j=\pi(i)$.
\end{definition}

The notion of perverse equivalence was originally developed by Chuang and Rouquier; in their work \cite{Rchu}, a more general definition (with respect to our Definition \ref{defperve}) of perverse equivalence is provided, as well as a particular mention about the specific case of non-decreasing perversity. 

We notice that Definition \ref{defperve} carries a bijection between the set of simple $A_1$-modules and $A_2$-modules, and this is given by indexing those sets from $1$ to $n$. With an abuse of notation, we can often think of $\pi$ as a function defined on the set $\{T_1, \dots, T_n \}$ rather than $\{1, \dots, n \}$, and therefore write $\pi(T_i)$ instead of $\pi(i)$. This will make the notation easier in some settings. 

\vspace{2mm}

\subsubsection{The algorithm \texttt{\emph{PerverseEq}}}\label{atee}

This algorithm is meant to construct complexes $X_1, \dots, X_n$ which are the images of the simple $B_0(G)$-modules $S_1, \dots, S_n$ under a potential perverse equivalence. 

Let $\pi : \mathcal{S}_{B_0(H)}\to \ZZ_{\geq 0}$ be a given map. Details about this map are provided in Section \ref{nmoved}, whereas here we want to show how $\pi$ is involved in our algorithmic construction. For any $r \in \ZZ_{\geq 0}$, we define:
\begin{equation}\label{algopi}
J_r:=\{V \in \mathcal{S}_{B_0(H)} \mid \pi(V) \leq r \}.
\end{equation} 
Let $T$ be a simple $B_0(H)$-module. We now explain how to produce the complex $X_T \in \mathcal{D}(B_0(H))$ which is supposed to be the image of $T$ under a potential perverse derived equivalence. 

\vspace{2mm}

Whenever $\pi(T)=0$, the algorithm will return the complex $X_T : 0 \to T \to 0$. Let us assume now that $n:=\pi(T) > 0$. Then we will produce a complex of length $n+1$ that we will denote by 
\begin{equation}\label{complexper}
X_T : 0 \to P_{n} \xlongrightarrow{\varphi_{n}} P_{n-1} \xlongrightarrow{\varphi_{n-1}} \dots \xlongrightarrow{\varphi_1} P_0 \to 0,
\end{equation}
where $P_0$ is in degree zero, and then $P_n$ in degree $-n$.
Before defining each module of the complex, we introduce the notation:
\begin{definition}
Let $A$ be an algebra and let $\mathfrak{T}$ be a set of simple $A$-modules and $M$ an $A$-module. The  $\mathfrak{T}$-\emph{radical} of $M$ is defined as the largest submodule $\mathfrak{T}$-\text{rad}$(M) \subseteq M$ with composition factors in $\mathfrak{T}$.
\end{definition}
Now we can finally build the complex \ref{complexper}. The first module $P_{n}$, of degree $-n$, will be the injective hull of $T$, so $P_{n}:=I(T)$. In particular, this is isomorphic to the projective cover of $T$, since a finitely generated $kH$-module is injective if and only if it is projective (for example, see \cite[Th. 4.11.3]{Link}). To define the map $\varphi_{n}$, we will start by giving the kernel of it. We define $\ker \varphi_{n}:=M_{n}$, where $M_{n}$ is the submodule of $P_n$ such that 
\begin{equation*}
M_n/T = J_{n-1}\text{-rad}(P_n/T).
\end{equation*}
The following term is defined by $P_{n-1} := I(P_n/M_n)$, with natural map $\varphi_n$ being the composition of the projection to the quotient and the inclusion in the injective hull. 
This is just the first step of the inductive process: in general, we set $L_i := \text{Im}(\varphi_i)$ and we get modules $M_i$ such that
\begin{equation}\label{bayes}
M_i/L_{i+1}= J_{i-1}\text{-rad}(P_{i}/L_{i+1}), \ 2 \leq i \leq n-1.
\end{equation}

This allows us to define each module of the complex inductively: we define $P_{i-1}$ and the map $\varphi_i$ as
\begin{equation*}
\begin{split}
P_{i-1} := & I(P_i/M_i), \ i=3, \dots, n-1,  \\ 
 \varphi_i: P_i &\twoheadrightarrow   P_i/M_i \hookrightarrow P_{i-1}, 
 \\
\end{split}
\end{equation*}
where the surjective map and the injective map are the natural projection and the natural inclusion in the injective hull, respectively. So far, we have defined the construction of our complex $X_T$ up to the degree $-2$, and this is what the algorithm \texttt{PerverseEq} is actually computing; it remains to define the last two terms $P_1$ and $P_0$, namely the final part of $X_T$:
\begin{equation*}
\dots \xlongrightarrow{\varphi_{2}}  P_1 \xlongrightarrow{\varphi_{1}} P_0 \to 0.
\end{equation*}
This construction can generally be performed manually, otherwise we might use the short algorithm called \texttt{FindP1}. 
The following definition is necessary to describe the condition that $P_1$ has to satisfy.

\begin{definition}\label{stk}
Let $M$ be a $kG$-module. We say that $M$ is \emph{stacked relatively projective} with respect to a single subgroup $Q$ of $G$ if $M$ admits a filtration $\{ 0 \} = M_0 \subseteq \dots \subseteq M_m=M$ for some $m \in \NN$ such that $M_j/M_{j-1}$ is relatively $Q$-projective for each $j=1,\dots, m$.
\end{definition}
In particular, a relatively $Q$-projective module, and therefore a projective module as well, is a trivial example of stacked relatively projective module. Finally, let us consider a bijection between $\mathcal{S}_{B_0(H)}$ and $\mathcal{S}_{B_0(G)}$, and let $S$ be the $B_0(G)$-module corresponding to $T$ under this bijection. We have to specify that this bijection is usually defined via trial and error, under the criterion that it must fulfil the requirements that we are going to mention. In particular, this will be the bijection which is carried in the definition of the perverse equivalence.  
Given a bijection, in order to declare the algorithm successful, we request that $P_1$ and $P_0$ fulfil the following two conditions:
\begin{enumerate}
\item $P_0$ must be a copy of $C_S$, the Green correspondent of $S$;
\item each indecomposable summand of $P_1$ must be stacked relatively projective with respect to some proper subgroup $Q < D$, which could be a cyclic group of order $\ell$, or the trivial subgroup in case of a projective summand.
\end{enumerate}
Moreover, the conditions on the cohomology which are imposed by the perverse equivalence, and which are implemented by the relation (\ref{bayes}), must also hold. In order to fulfil this cohomology condition, we build $M_1$ in the same was as each previous kernel $M_2, M_3, \dots, M_n$, so as a submodule of $I(P_2/M_2)$; however, rather than defining $P_1$ as $I(P_2/M_2)$, we try to build it as an extension of $C_S$ by $M_1$ whose summands satisfy the condition (2) above. 
The last module $P_0$ is defined as 
\begin{equation*}
P_0 := P_1/M_1 \cong C_S,
\end{equation*}
where the isomorphism to $C_S$ holds by construction of $P_1$; this would fulfil the requested condition (1) on $P_0$. The map $\varphi_1 : P_1 \to P_0$ is the natural projection to the quotient. 

\begin{remark}\label{P1}
The crucial stage of this algorithm is about the construction of $P_1$. The construction of all the previous terms $P_2, \dots, P_n$ is determined by an iterative process, whereas the construction of $P_1$ is subject to the existence of a non-trivial peculiar extension of $C_S$ by $M_1$. The existence of such an extension is basically determining whether the algorithm is working with the given datum of $\pi$ and with the chosen bijection between $\mathcal{S}_{B_0(H)}$ and $\mathcal{S}_{B_0(G)}$.
\end{remark}

\begin{remark}
An algorithm to test whether the module $P_1$ admits a filtration by a given list of modules is provided. In the case that we consider, for each conjugacy class of subgroups $Q \simeq C_{\ell}$ of $H$, it is enough to consider the list of indecomposable modules with vertex $Q$ and trivial source, namely the indecomposable summands of $\Ind^{H}_Q k$.
\end{remark}

\begin{remark}
The complexes $X_T$ that we have constructed above are actually always the image of the simple modules of a suitable symmetric algebra $A$ under a derived equivalence $\mathcal{D}(A) \simeq \mathcal{D}(B_0(H))$. This strategy would actually move the difficult point to proving that $A$ and $B_0(G)$ are Morita equivalent; more details can be found in \cite[\S 3]{Dav}.
\end{remark}

We have defined the algorithmic construction of the set of complexes $\{ X_T \mid  T \in \mathcal{S}_{B_0(H)} \}$ which are the image of a perverse equivalence (provided that they fulfil the condition of Proposition \ref{forestanera}). We conclude this section by mentioning a property of the cohomology $H(X_i)$ of each complex; this property is explained in \cite{Dav2} via an explicit example. Let us fix a simple $B_0(H)$-module $T_i$, and let $X_i:=X_{T_i}$ be the complex generated by our algorithm. We consider the following virtual module:
\begin{equation*}
\bigoplus_{j=0}^{\pi(T_i)} \left( \bigoplus_{T \in \text{cpf}(H^{-j}(X_i))} (-1)^{j-\pi(T)} T \right).
\end{equation*}
Following \cite[\S 6.1]{Dav2}, this is called the \emph{alternating sum of cohomology}; we explain how this virtual module can be used to reconstruct the unitriangular form of the decomposition matrix that we have been using to define the bijection between a subset of irreducible characters of $G$ lying in $B_0(G)$ (if $G$ is of Lie type, this is the set of unipotent characters) and $\mathcal{S}_{B_0(G)}$. For a simple module $T_m \in \mathcal{S}_{B_0(H)}$, we denote by $a_m$ its multiplicity into the alternating sum of cohomology; in particular, we notice that due to the construction of $X_i$ (which is coming from the definition of perverse equivalence), each module $T_m$ appearing in the alternating sum, i.e. with $a_m > 0$, is such that $\pi(T_m) \leq \pi(T_i)$, and the equality occurs when $m=i$. In the following we denote by $r_j$ the vector consisting of the $j$-th row of the fixed unitriangular decomposition matrix, and by $v_j$ the vector of length $|\mathcal{S}_{B_0(H)}|$ consisting of $0$ in each entry, except for the $j$-th entry, which is $1$. As explained in \cite[Example 6.2]{Dav2}, the numbers $a_m$ fulfil the following conditions:
\begin{equation}\label{annina}
v_i = \sum_{\substack{m : \\  \pi(T_m) \leq \pi(T_i)}} a_m \cdot r_m=a_i \cdot r_i+ \sum_{\substack{m : \\  \pi(T_m) < \pi(T_i)}} a_m \cdot r_m.
\end{equation}
In particular, the rows of the decomposition matrix that we are considering have been ordered according to the $\pi$-value of each irreducible character, therefore each row $r_m$ such that $\pi(T_m) < \pi(T_i)$ comes before $r_i$; for example, the row of the trivial character $1_G$, whose $\pi$-value is $0$, is always at the top of the matrix. The relations (\ref{annina}) show that we can reconstruct the unitriangular decomposition matrix inductively: assume that we already have each row $r_m$ such that $\pi(T_m) < \pi(T_i)$, then the alternating sum of cohomology would provide the numbers $a_m$ for $m=1, \dots, i$ and therefore we can compute the next row $r_i$ of the decomposition matrix. 

In our example $G=\Omega^{+}_8(2)$ (see the table \ref{scle}) , we will report the data coming from the alternating sum of cohomology under the label ``total'', and using the formal expression
\begin{equation*}
\sum_{\substack{m : \\  \pi(T_m) \leq \pi(T_i)}} a_m \cdot m
\end{equation*}
instead of the vector notation with $r_m$. Typically, we will almost always find that our coefficients $a_m$ are $1$, $-1$ or $0$. 

\vspace{3mm}

\section{Rouquier's theorem}

We are now ready to state the theorem which explains how the results of the algorithm can let us deduce Brou\'e's conjecture for the given $B_0(G)$.

\begin{proposition}\emph{\textbf{(\cite[\S 3]{Dav})}}\label{forestanera}
Let $G$ be a finite group, $\ell$ a prime number, $D \in \Syl_{\ell}(G)$, where $D \simeq C_{\ell} \times C_{\ell}$, and $H:=N_G(D)$. Let $L : \underline{\emph{mod}}(B_0(G))  \xrightarrow{\sim} \underline{\emph{mod}}(B_0(H))$ be the stable equivalence described in Section \ref{decc}. Let us assume that: 
\begin{itemize}
\item 
there is a perversity function $\pi$ and there is a bijection between $\mathcal{S}_{B_0(G)}$ and $\mathcal{S}_{B_0(H)}$ such that for each $T \in \mathcal{S}_{B_0(H)}$, the complex $X_T$ fulfils the two conditions that make the algorithm \texttt{PerverseEq} successful (as stated in Section \ref{atee});
\item each $X_T$ is stably isomorphic to $L(S)$, where $T \in \mathcal{S}_{B_0(H)}$  and $S \in \mathcal{S}_{B_0(G)}$ correspond under the bijection introduced above.
\end{itemize}
If those two conditions hold, then there is a perverse derived equivalence between $B_0(G)$ and $B_0(H)$, and therefore Brou\'e's conjecture holds for the principal $\ell$-block of $G$. In particular, this derived equivalence between $B_0(G)$ and $B_0(H)$ induces $L$ as a stable equivalence, and if we regard $S$ as a complex concentrated in degree zero, $X_T$ is the image of $S$ under this perverse derived equivalence.
\end{proposition}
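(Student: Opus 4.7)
The plan is to apply the lifting principle underlying the Chuang--Rouquier theory of perverse equivalences: if a stable equivalence of Morita type between symmetric $k$-algebras admits, on one side, a collection of complexes that look like the images of the simples under an abstract perverse derived equivalence and that stably match the images of the simples on the other side, then the stable equivalence lifts to a perverse derived equivalence. Concretely, I would first observe that the family $X := \bigoplus_T X_T$ built by \texttt{PerverseEq}, by the way it is assembled from injective hulls and $J_r$-radicals and by the conditions imposed on $P_0$ and $P_1$, is a tilting-style family in $\mathcal{D}(B_0(H))$ whose total endomorphism dg-algebra has an $A_\infty$-model realised by a finite-dimensional symmetric basic $k$-algebra $A$; this is the content of \cite[\S 3]{Dav}, and yields a perverse derived equivalence
\begin{equation*}
F : \mathcal{D}(A) \xrightarrow{\sim} \mathcal{D}(B_0(H))
\end{equation*}
with perversity function $\pi$, sending the simple $A$-module indexed by $T$ to $X_T$.

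With $F$ in hand, the stable equivalence $L$ can be tested against it. Composing produces a stable equivalence of Morita type
\begin{equation*}
\Psi := \underline{F}^{-1} \circ L : \underline{\text{mod}}(B_0(G)) \xrightarrow{\sim} \underline{\text{mod}}(A),
\end{equation*}
and the second hypothesis of the proposition, $L(S) \cong X_T$ in $\underline{\text{mod}}(B_0(H))$, says exactly that $\Psi$ sends each simple $B_0(G)$-module to the simple $A$-module indexed by the corresponding $T$. A stable equivalence of Morita type between two symmetric $k$-algebras which sends each simple to a simple is induced by a Morita equivalence, by a theorem of Linckelmann (see e.g.\ \cite{Link}); thus $\Psi$ is realised by a Morita equivalence $B_0(G) \xrightarrow{\sim} A$. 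Composing this Morita equivalence with $F$ then yields the sought-after functor $\mathcal{D}(B_0(G)) \xrightarrow{\sim} \mathcal{D}(B_0(H))$: it is perverse with perversity $\pi$ (since the Morita step is perverse of perversity $0$ and $F$ is perverse of perversity $\pi$), it sends $S$ to $X_T$, and on stable categories it reduces to $\underline{F} \circ \underline{F}^{-1} \circ L = L$, as required.

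\textbf{Main obstacle.} The genuine work sits in the first step, namely extracting a bona fide symmetric algebra $A$ together with the perverse derived equivalence $F$ from the combinatorial data of the $X_T$. This input is imported wholesale from \cite[\S 3]{Dav} and rests on $A_\infty$-style reconstruction, which is precisely why the conditions on $P_0$ (being the Green correspondent of $S$) and on $P_1$ (stacked relatively $Q$-projective for $Q < D$) cannot be relaxed: these are the minimal cohomological and local constraints that guarantee $A$ is a well-defined symmetric $k$-algebra rather than merely a dg-algebra up to quasi-isomorphism. Once that step is granted, the remainder reduces to formal manipulations with stable and derived equivalences between symmetric algebras together with Linckelmann's lifting theorem.
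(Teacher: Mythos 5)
Your proposal matches the strategy the paper imports from Craven--Rouquier: the remark at the end of Section~2.3 states exactly that the $X_T$ are the images of the simple modules of a suitable symmetric algebra $A$ under a derived equivalence $\mathcal{D}(A) \simeq \mathcal{D}(B_0(H))$, and that the difficulty is then moved to proving $A$ and $B_0(G)$ are Morita equivalent. Your route through Linckelmann's lifting theorem (a stable equivalence of Morita type between symmetric algebras sending simples to simples is a Morita equivalence) applied to $\underline{F}^{-1} \circ L$ is precisely the standard mechanism for that Morita step, and the final composition and perversity bookkeeping are as you describe; this is essentially the argument of \cite[\S 3]{Dav} that the paper cites without reproving.
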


In other words, the second condition of Proposition \ref{forestanera} is what connects our two algorithms, since it states that in order to deduce Brou\'e's conjecture for $B_0(G)$, the objects that \texttt{PerverseEq} and \texttt{FinalStabEq} have independently constructed must coincide up to projective summands. 

\vspace{1mm}

In the next section, we shall give a very short review of the so-called geometric Brou\'e's conjecture, with the aim to justify more why this computational approach works well for groups of Lie type, as well as explaining where the expression given in (\ref{perversity}) of our perversity map comes from. 

\begin{remark}\label{sacrif}
The construction of the objects $C \otimes_{kG} S$ has been implemented in each case considered, using the isomorphism (\ref{finalC}). The method of constructing $C \otimes_{kG} S$ that we have just explained has proved to be successful as long as each complex $X_T$ for $T \in S_{B_0(H)}$ that is produced by \texttt{PerverseEq} has the the following property: every indecomposable summand of the module $X^{-1}_T$ in degree $-1$ has vertex $Q$ for some proper (possibly trivial) subgroup $Q < D$; the subgroup $Q$ will generally differ when considering different summands of $X^{-1}_T$.
In general, it is not true that each summand of $X^{-1}_T$ is relatively $Q$-projective: when dealing with individual groups, we will see that $X^{-1}_T$ is a sum of modules of vertex $Q$ and \emph{stacked} relatively projective modules (see Definition \ref{stk}), and whenever this last type of summand occurs, the result for $C \otimes_{kG} S$ given by the algorithm \texttt{FinalStabEq} has proved to be incorrect, and so we have not been able to conclude by applying Proposition \ref{forestanera}. This has not allowed us to complete the proof of Brou\'e's conjecture yet for $G={}^2F_4(2).2,Sp_8(2)$ with $\ell=5$, and ${}^3D_4(2)$ with $\ell=7$. Work in order to fix this result and make it produce the right images under the stable equivalence in every case is in progress.
\end{remark}

\section{Geometric Brou\'e's conjecture and perversity functions}\label{geomdm}

Although we will not provide a deep report of the current theory behind it, we can mention how the search of a perverse equivalence intersects with some underlying geometry of the group, represented by some Deligne-Lusztig varieties. In our setting, this connection (still conjectural in large part) is related to the crucial choice of the perversity function $\pi$, for which we will introduce the precise expression (\ref{perversity}) to use in our algorithm. Therefore, in this section $G=\GG(q)$ will be a group of Lie type, for some generic group of Lie type $\GG$ and some $p$-power $q$, where $p \neq \ell$ is a prime; moreover, the facts that we state here are generally valid for \emph{unipotent} $\ell$-blocks $B$ of $G$. We recall that $B$ is a unipotent block if there is a unipotent character lying in $B$. As the trivial character is unipotent, the principal block $B_0(G)$ is a unipotent block.
The set $\Uch(B_0(G))$ will denote the unipotent characters of $G$ lying in the principal block $B_0(G)$. 

\vspace{1mm}

The geometric form of Brou\'e's Conjecture predicts that the derived equivalence between a unipotent block $B$ and its Brauer correspondent $b$ is induced by the cohomology complex of a specific variety $Y_{\kappa/d}$, for some coprime integers $\kappa, d$, carrying an action of $C_G(D)$ on the left and an action of $G$ on the right. Such a variety is called \emph{Deligne-Lusztig variety}. We will not go into the details of the theory behind the Deligne-Lusztig varieties, but we can mention that the existence of a suitable (for the purpose of Brou\'e's Conjecture) variety $Y_{\kappa/d}$ has been studied by Brou\'e and Michel \cite{BM} when $C_G(D)$ is a torus, as well as by Digne and Michel \cite{DM} when $C_G(D)$ is a Levi subgroup and $B$ is the principal block of $G$. 

\vspace{1mm}

In this setting, an $\ell$-modular system $(K,\mathcal{O},k)$ has been fixed and $\mathcal{O}$ is an extension of $\ZZ_{\ell}$ such that $K$ is large enough for $G$. Let $D$ be the defect group of the block $B$. Let $d$ be the multiplicative order of $q$ modulo $\ell$, and $\kappa$ a positive integer prime to $d$. 
The complex $H^{\bullet}(Y_{\kappa/d},K)$ of $K G$-modules (this is sometimes regarded as a graded vector space, instead of a complex with zero differential) arises from a complex - usually denoted by $R\Gamma(Y_{\kappa/d},\mathcal{O})$ - defined over $\mathcal{O}$, which produces a complex $C$ over $k$ as well, by reducing modulo $J(\mathcal{O})$. The complex $C$ is the central object of the geometric form of Brou\'e's conjecture. What we know is that it carries an action of $G$ on the right, and an action of $C_G(D)$ on the left; it is conjectured that this action can always be extended to $N_G(D)$. If this was true, then Brou\'e and Malle conjectured that, as a complex of $(kN_G(D),kG)$-bimodules, it is inducing a derived equivalence:
\begin{conjecture}\emph{\textbf{(Brou\'e, Malle - 1993,  \cite[p. 124, ($\ell$-V6)]{BrMa})}}\label{fgeom}
Let $d$ be the order of $q$ modulo $\ell$. There exists $\kappa$ such that the complex $C$ of $(kN_G(D),kG)$-bimodules described above induces a derived equivalence between $B$ and its Brauer correspondent $b$. 
\end{conjecture}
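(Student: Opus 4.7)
The plan is to split Conjecture~\ref{fgeom} into two separate assertions: first, the \emph{construction} of the bimodule structure on $C$, and second, the proof that the resulting complex induces a derived equivalence. For the first part, one starts from the complex $R\Gamma(Y_{\kappa/d},\mathcal{O})$, which carries by construction a right $G$-action and a left $C_G(D)$-action. The task is to choose $\kappa$ coprime to $d$ so that the $C_G(D)$-action lifts canonically to an $N_G(D)$-action \emph{at the chain level}, not merely on cohomology. In the cases where $C_G(D)$ is a torus (Brou\'e--Michel \cite{BM}) or a $d$-split Levi (Digne--Michel \cite{DM}), this extension has been verified; the natural strategy is to extend those methods by producing a braid-theoretic lift of the component group $N_G(D)/C_G(D)$ into the automorphism group of $Y_{\kappa/d}$, and to pin $\kappa$ down canonically as the smallest positive integer for which the associated endomorphism of $Y_{\kappa/d}$ stabilises a Sylow $\Phi_d$-torus whose normaliser contains $N_G(D)$.

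For the second part, the natural tool is Rouquier's Proposition~\ref{induc}: once $C$ has been shown to be splendid, it suffices to verify that for each non-trivial $Q\leq D$ the Brauer-localized complex $\mathrm{Br}_{\Delta Q}(C)$ induces a splendid derived equivalence between $B_0(C_G(Q))$ and $B_0(C_H(Q))$, where $H=N_G(D)$. The compatibility between the Brauer construction on complexes of $\ell$-permutation bimodules and the \'etale cohomology of fixed-point subvarieties (after Rickard and Bonnaf\'e--Rouquier) should identify $\mathrm{Br}_{\Delta Q}(C)$ with $R\Gamma(Y_{\kappa/d}^Q,k)$ for the centraliser pair $(C_G(Q),C_H(Q))$. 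Since $D\cap C_G(Q)$ is cyclic of order $\ell$, one can then appeal to the classical Linckelmann--Rickard--Rouquier theorem on cyclic defect groups to obtain the required local splendid derived equivalence; reassembling these local equivalences through Proposition~\ref{induc} then yields the desired global derived equivalence between $B$ and $b$.

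The principal obstacle lies in the first part: the extension of the $C_G(D)^F$-action to $N_G(D)^F$ at the \emph{chain level} (rather than on graded cohomology) is only understood in restricted cases, and it reflects in essence an obstruction in the structure of the relevant generalised braid groups. A secondary difficulty is the control of $\ell$-adic torsion in $H^{\bullet}(Y_{\kappa/d},\mathcal{O})$, needed to guarantee that the reduction of $C$ modulo $J(\mathcal{O})$ is a genuine two-sided tilting complex and not merely an equivalence of Grothendieck groups. A realistic intermediate goal is to establish the conjecture first for principal unipotent blocks of small-rank groups of Lie type, where $\kappa$ can be pinned down explicitly, and then to reduce the general unipotent case to the quasi-isolated one via the Morita theory of Bonnaf\'e--Dat--Rouquier; the present paper's algorithmic verification for $\Omega^{+}_{8}(2)$ can be viewed as exactly such an explicit test, providing evidence for the conjectured shape of $\kappa$ in the $d=2$, $\ell=5$ situation.
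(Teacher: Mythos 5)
Conjecture~\ref{fgeom} (Brou\'e--Malle) is a well-known open problem; the paper does not prove it, nor does it claim to. It is stated purely as motivation for the particular perversity function $\pi_{\kappa/d}$, and the paper's actual contribution is a computational verification of a related perverse-equivalence statement for the single group $\Omega^+_8(2)$. So there is no ``paper proof'' to compare your proposal against, and you correctly present your text as a research outline rather than an argument. Still, two substantive errors in the outline are worth flagging.

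First, your second step misreads Proposition~\ref{induc}. That result asserts an equivalence between the \emph{local} derived equivalences (for all non-trivial $Q\le D$) and $C$ inducing a splendid \emph{stable} equivalence globally -- not a derived equivalence. So even if every local verification went through perfectly, reassembling them via Proposition~\ref{induc} would only produce a stable equivalence between $B$ and $b$. Lifting that stable equivalence to a derived equivalence is precisely the hard step: it is the content of \texttt{PerverseEq} and Proposition~\ref{forestanera} in the paper, it currently works only case-by-case, and its failure to go through uniformly is exactly why the conjecture remains open even for $D\cong C_\ell\times C_\ell$ when $\ell$ is odd. Your outline silently elides this gap.

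Second, the assertion that ``$D\cap C_G(Q)$ is cyclic of order $\ell$'' is false. Since $D$ is abelian and $Q\le D$, one has $D\le C_G(Q)$, hence $D\cap C_G(Q)=D$. The correct reduction to the cyclic-defect theorem passes through the complements $\bar C_G(Q)$ and $\bar C_H(Q)$ of Section~\ref{decc}, whose Sylow $\ell$-subgroups are cyclic; the blocks $B_0(C_G(Q))$ themselves still have defect group $D$, and constructing the required local derived equivalence from the cyclic case is a genuine piece of work due to Rouquier, not a direct application of Theorem~2.1. Finally, note that the conjecture as stated concerns an \emph{arbitrary} unipotent block $B$ with abelian defect group, whereas your whole strategy is tailored to the rank-two case, so even if the above gaps were filled the argument would not reach the generality of the statement.
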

The conjectured derived equivalence induced by $C$ should also be perverse. 
As remarked in \cite[\S 5]{Dav2}, some cases of this conjecture are known, for example \cite[Theorems B, 4.13 and 4.20]{Dud} and \cite{DudRo}.

For our computational approach, the object $H^{\bullet}(Y_{\kappa/d},K)$ is too hard to manage, and so the perverse equivalence that is conjecturally induced must be obtained via a different direction. As a complex, $H^{\bullet}(Y_{\kappa/d},K)$ is predicted to fulfil the following property:
\begin{conjecture}\emph{\textbf{(Brou\'e, Malle - 1993, \cite[p.127, (d-V4)]{BrMa})}}\label{istheone}
Let $\chi$ be a unipotent character of $KG$ belonging to the principal block. The complex of cohomology $H^{\bullet}(Y_{\kappa/d},K)$ has a \textbf{unique} degree in which $\chi$ appears. This defines a 
function 
\begin{equation*}
\pi_{\kappa/d} : \Uch(B_0(G)) \to \mathbb{Z}_{\geq 0},
\end{equation*}
where $\pi_{\kappa/d}(\chi)$ is such degree.
\end{conjecture}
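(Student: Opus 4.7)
The plan is to leverage the Frobenius eigenvalue formalism attached to $Y_{\kappa/d}$. The variety carries a natural action of a $\kappa$-th root of the Frobenius $F$, and by Deligne's purity theorem (applied to a suitable smooth compactification) its eigenvalues on each cohomology group have prescribed absolute value. The first step is to record the $KG$-decomposition
\[
H^{i}(Y_{\kappa/d},K)\;\cong\;\bigoplus_{\chi\in\Uch(B_0(G))} V_{i,\chi}\otimes \chi,
\]
where the multiplicity spaces $V_{i,\chi}$ carry eigenvalues $\lambda_{i,\chi}$ of the root of Frobenius; these can be computed, at least formally, from the Lefschetz trace formula applied to the rational points of $Y_{\kappa/d}$.

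The second step is to compare $\lambda_{i,\chi}$ with the intrinsic Frobenius eigenvalue $\omega_\chi$ attached by Lusztig to the unipotent character $\chi$. On the $\chi$-isotypic component appearing in degree $i$ one expects a relation of the shape
\[
\lambda_{i,\chi}^{\,d} \;=\; \omega_{\chi}^{\,\kappa}\cdot q^{\,i-d_Y},
\]
where $d_Y=\dim_{\mathbb{C}} Y_{\kappa/d}$. Because the modulus of the right-hand side depends strictly monotonically on $i$ while $\omega_\chi$ is fixed (it is a root of unity times a half-power of $q$), this equation admits at most one solution in $i$; that solution is precisely $\pi_{\kappa/d}(\chi)$. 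Solving it explicitly in terms of Lusztig's invariants (the $a$- and $A$-functions and the $q$-valuation of $\omega_\chi$) recovers the formula (\ref{perversity}) used in the algorithm.

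The hard part will be disjointness, namely ruling out that $\chi$ contributes in several different degrees with matching Frobenius data. In the cyclic-defect case this is handled by \cite{BM}, and when $C_G(D)$ is a Levi subgroup by \cite{DM}, but the general case is open. My approach would be to exhibit $R\Gamma(Y_{\kappa/d},\mathcal{O})$ as induced from a $d$-cuspidal datum, so that the problem reduces to a cuspidal case where the Frobenius eigenvalue map is expected to be injective on the relevant unipotent characters, and then to propagate disjointness through the induction using the Hecke-algebra action on the cohomology provided by the $d$-split Levi structure of Brou\'e--Malle--Michel. Crucially, for our concrete target $\Omega^{+}_{8}(2)$ with $\ell=5$ one need not settle the conjecture in general: the candidate $\pi$ defined by (\ref{perversity}) can be read off the tabulated invariants of the unipotent characters of $D_4(q)$, and the algorithmic check carried out in the next sections -- matching the output of \texttt{PerverseEq} with that of \texttt{FinalStabEq} via Proposition \ref{forestanera} -- furnishes a direct computational verification that this particular $\pi$ does define a perverse equivalence, independently of Conjecture \ref{istheone}.
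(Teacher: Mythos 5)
The statement you were asked to prove is an open \emph{conjecture} of Brou\'e and Malle; the paper does not prove it, it merely cites it as motivation for the choice of perversity function and explicitly states a few lines below that ``these conjectures are all still open and have been proved in restricted cases only.'' There is therefore no proof in the paper to compare your attempt against. Your final paragraph correctly identifies the logical status: for the paper's purposes one does not need Conjecture \ref{istheone} itself, only the concrete candidate $\pi$ computed from the formula (\ref{perversity}), which is then validated \emph{a posteriori} by matching \texttt{PerverseEq} against \texttt{FinalStabEq} via Proposition \ref{forestanera}.

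On the mathematical content of your sketch: the Frobenius-eigenvalue heuristic is indeed the conceptual source of the conjecture, but several steps you treat as available are themselves conjectural or fail in the generality you need. Purity of $H^{\bullet}(Y_{\kappa/d},K)$ is not known in general, and the central relation you posit, $\lambda_{i,\chi}^{d}=\omega_{\chi}^{\kappa}\,q^{\,i-d_Y}$, is only established in special cases (essentially the Coxeter-variety results of Lusztig and of Dudas and Dudas--Rouquier cited in the paper). The ``disjointness'' step is exactly the open content of the conjecture, and your proposed reduction to a $d$-cuspidal datum has a genuine gap: the Frobenius eigenvalue map is \emph{not} injective on cuspidal unipotent characters in general, so a single eigenvalue does not pin down a unique degree; moreover the Hecke-algebra action on $R\Gamma(Y_{\kappa/d},\mathcal{O})$ needed to propagate disjointness through $d$-Harish-Chandra induction has only been constructed for restricted families of Deligne--Lusztig varieties. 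What you have written is a reasonable account of \emph{why the conjecture is believed}, not a proof, and that matches the paper's treatment of the statement.
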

As we will explain in the subsection \ref{nmoved}, a map $\pi_{\kappa/d} : \Uch(B_0(G)) \to \mathbb{Z}_{\geq 0}$ can be regarded as a map $\pi_{\kappa/d} : \mathcal{S}_{B_0(H)} \to \mathbb{Z}_{\geq 0}$ using a unitriangular form of the decomposition matrix of $B_0(G)$. This is supposed to be the perversity function that characterises the conjectured perverse equivalence:
\begin{conjecture}\emph{\textbf{(Craven - Rouquier, \cite[\S 3.4.1]{Dav})}}
The derived equivalence induced by $C$ is a perverse equivalence.
The function $\pi_{\kappa/d} : \mathcal{S}_{B_0(H)} \to \mathbb{Z}_{\geq 0}$ descending from the unitriangular form of the decomposition matrix of $B_0(G)$ together with the map of Conjecture \ref{istheone} is the related perversity function.
\end{conjecture}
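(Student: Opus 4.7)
The plan is to split the conjecture into its two assertions: that $C$ induces a derived equivalence between $B$ and $b$, and that this equivalence is perverse with perversity function $\pi_{\kappa/d}$. The starting point is the lift $R\Gamma(Y_{\kappa/d}, \mathcal{O})$ of $C$ to $\mathcal{O}$; one would first extend the left $C_G(D)$-action to an $N_G(D)$-action, as predicted by Brou\'e--Malle, and then work with $C$ as a complex of $(kN_G(D), kG)$-bimodules.

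Granting (as in the Brou\'e--Malle conjecture above) that $C \otimes_{kG} -$ is a derived equivalence between $B$ and $b$, the next step is to identify the perversity function. Given a unipotent character $\chi_i \in \Uch(B_0(G))$, Conjecture \ref{istheone} assigns to $\chi_i$ a unique degree $\pi_{\kappa/d}(\chi_i)$ in which it appears in $H^\bullet(Y_{\kappa/d}, K)$. Ordering unipotent characters and simple $B_0(G)$-modules according to their $\pi_{\kappa/d}$-values, the unitriangular form of the decomposition matrix provides the bijection $\chi_i \leftrightarrow S_i$, and the derived equivalence transports this to a bijection $S_i \leftrightarrow T_i$ with simple $B_0(H)$-modules. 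To verify perversity, the composition factors of $H^{-j}(C \otimes_{kG} S_i)$ would be computed by induction on $\pi_{\kappa/d}(\chi_i)$: in characteristic zero, only $\chi_i$ contributes in top degree, and this should lift, via a careful analysis of the reduction of $R\Gamma(Y_{\kappa/d}, \mathcal{O})$, to show that $H^{-\pi_{\kappa/d}(\chi_i)}(C \otimes_{kG} S_i)$ contains exactly one copy of $T_i$ (plus, possibly, copies of $T_\alpha$ with $\pi(\alpha) < \pi_{\kappa/d}(\chi_i)$), while the lower-degree terms should consist only of $T_\alpha$ with $\pi(\alpha) < j$, coming from torsion in the integral cohomology whose character is tracked by the unitriangular decomposition matrix and the alternating-sum relation \eqref{annina}.

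The main obstacle is the geometric input. The Brou\'e--Malle conjecture is open in almost all cases, and proving it requires detailed information about the integral cohomology of $Y_{\kappa/d}$ and the $N_G(D)$-equivariance of its bimodule structure, available only in isolated situations (as in the works of Dudas and Dudas--Rouquier cited in the text). Even granting the derived equivalence, controlling the torsion in each $H^{-j}$ relies on a vanishing or freeness property of $R\Gamma(Y_{\kappa/d}, \mathcal{O})$ outside the predicted degrees, which is itself a strong structural assumption on the variety. In practice, and in the spirit of the present paper, one would attack the conjecture case by case: construct the stable equivalence $L$ explicitly as in Section \ref{decc}, produce candidate complexes $X_T$ from $\pi_{\kappa/d}$ by \texttt{PerverseEq}, and verify Proposition \ref{forestanera} directly. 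This converts the conjecture into a finite, if delicate, computation for each individual $G$, at the cost of leaving the general geometric picture untouched.
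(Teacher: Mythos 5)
The statement you were asked to address is a \emph{conjecture}: the paper records it without proof, and indeed it is open in general. There is therefore no paper argument to compare your proposal against, and you correctly recognize this --- your submission is a strategy sketch with honest acknowledgement of the missing inputs, not a proof. Your assessment of those inputs is accurate: establishing the conjecture would require the geometric Brou\'e--Malle statement (that $R\Gamma(Y_{\kappa/d})$, with left action extended to $N_G(D)$, gives a derived equivalence) together with degree-by-degree control of the integral cohomology of $Y_{\kappa/d}$, both of which are known only in the isolated cases cited (Dudas, Dudas--Rouquier). You also correctly identify the paper's actual use of this conjecture: it is invoked solely as the source of the perversity function $\pi_{\kappa/d}$ via the explicit formula of Conjecture~\ref{conjpi}, which is fed as conjectural input into \texttt{PerverseEq} and \texttt{FinalStabEq} and checked against Proposition~\ref{forestanera} for individual groups, thereby establishing Brou\'e's conjecture in those cases without ever proving the geometric statement.

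One imprecision worth flagging: you write that the derived equivalence ``transports'' the bijection $\chi_i \leftrightarrow S_i$ to a bijection $S_i \leftrightarrow T_i$. An abstract derived equivalence does not canonically induce a bijection between simple modules; in Definition~\ref{defperve} that bijection is additional data in the structure of a perverse equivalence, and in the paper it is found essentially by trial and error, constrained by numerical relations such as~(\ref{benny}). Appealing to the (conjectural, not-yet-known-to-be-perverse) equivalence to produce the bijection is circular at that stage of the argument; in practice the bijection and the perversity function have to be supplied together and validated jointly by the algorithm.
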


These conjecture are all still open and have been proved in restricted cases only. Let us try to summarise them all together and state the Geometric form of Brou\'e's Conjecture.

\begin{conjecture}\emph{\textbf{(Geometric Brou\'e's abelian defect conjecture)}}
Let $\ell$ be a prime and $G=\GG(q)$ be a finite group of Lie type, for some generic group of Lie type $\GG$ and some $p$-power $q$, where $p \neq \ell$ is a prime, and let $D \in \emph{Syl}_{\ell}(G)$. We work with an $\ell$-modular system $(K,\mathcal{O},k)$ where $\mathcal{O}$ is an extension of $\ZZ_{\ell}$ such that $K$ is large enough. There exists a Deligne-Lusztig variety $Y_{\kappa/d}$, where $d$ is the order of $q$ modulo $\ell$ and $\kappa$ is coprime to $d$, acted on by $G$ on the right and by $C_G(D)$ on the left, such that:
\begin{itemize}
\item the (right) action of $C_G(D)$ extends to an action of $H:=N_G(D)$;
\item given a unipotent character $\chi \in \Uch(B_0(G))$ of $KG$, then $\chi$ occurs in a unique cohomology group $H^{i}(Y_{\kappa/d},K)$, namely the only degree of the complex where $\chi$ appears is $i$;
\item the cohomology complex \ $C:=R\Gamma(Y_{\kappa/d},k)$, as a complex of $(B_0(G), B_0(H))$ bimodules, induces a perverse equivalence between $B_0(G)$ and $B_0(H)$;
\item the perversity function $\pi_{\kappa/d} : \mathcal{S}_{B_0(H)} \to \mathbb{Z}_{\geq 0}$ of the perverse equivalence above can be obtained by the map $\Uch(B_0(G)) \to \mathbb{Z}_{\geq 0}$ that gives the degree $i$ where $\chi \in \Uch(B_0(G))$ appears, and by a suitable bijection between $\Uch(B_0(G))$ and $\mathcal{S}_{B_0(H)}$ described in Section \ref{nmoved}.
\end{itemize}
\end{conjecture}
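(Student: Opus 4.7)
The plan is to apply Proposition \ref{forestanera} with $G := \Omega^{+}_{8}(2)$ and $\ell = 5$. First I would fix $D \in \Syl_5(G)$ and $H := N_G(D)$; since $|G|_5 = 25$ and the Sylow $5$-subgroup of $G$ is elementary abelian, the hypothesis $D \cong C_5 \times C_5$ holds, so Rouquier's construction recalled in Proposition \ref{induc} yields a splendid complex $C$ of $(B_0(H), B_0(G))$-bimodules whose associated functor $L := C \otimes_{kG} -$ induces a splendid stable equivalence. The task then reduces to lifting $L$ to a splendid perverse derived equivalence via the two-step matching procedure formalised in Proposition \ref{forestanera}.

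Next I would run \texttt{FinalStabEq} in MAGMA to compute $L(S)$ for each $S \in \mathcal{S}_{B_0(G)}$ using the explicit formula (\ref{finalC}). Concretely, for each $H$-conjugacy class of subgroups $Q < D$ of order $5$, I would build the ambient groups $\bar{C}_H(Q), \bar{C}_G(Q), N_{\Delta}$, extract the Rouquier bimodule $M_Q$ from the decomposition (\ref{propro}), and, by inspecting the Brauer tree of $B_0(\bar{C}_G(Q))$, construct $U_Q$ and hence $T_Q$ by hand. The degree-$0$ term of $L(S)$ is then (up to projectives) the Green correspondent $C_S$, while the degree-$(-1)$ term is assembled as the direct sum over $Q$ of the induced modules $\Ind_{N_H(Q)}^H (T_Q \otimes_{k\bar{C}_G(Q)} S_{N_G(Q)})$, cut by the principal block idempotent $e_H$.

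In parallel I would fix a candidate perversity function $\pi : \mathcal{S}_{B_0(H)} \to \ZZ_{\geq 0}$, informed by the conjectural cohomological degrees of unipotent characters on a Deligne-Lusztig variety $Y_{\kappa/d}$ for appropriate coprime $\kappa, d$ (with $d$ the multiplicative order of $q = 2$ modulo $5$), together with a candidate bijection $\mathcal{S}_{B_0(G)} \leftrightarrow \mathcal{S}_{B_0(H)}$ read off from a unitriangular form of the decomposition matrix of $B_0(G)$. Feeding $\pi$ into \texttt{PerverseEq} produces candidate complexes $X_T$, built inductively as injective hulls modulo successive $J_r$-radicals as in (\ref{bayes}); the final module $P_1$ must then be constructed, either by hand or via \texttt{FindP1}, as a suitable extension of $C_S$ by $M_1$ whose indecomposable summands are stacked relatively projective with respect to proper subgroups of $D$. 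With both sides in hand, I would verify, using the replacement of $T_Q$ by $U_Q$ explained in Remark \ref{thesamm}, that $L(S)$ and $X_T$ are stably isomorphic for every corresponding pair, at which point Proposition \ref{forestanera} delivers the desired perverse derived equivalence, splendid by construction, and hence Rickard's refinement.

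The main obstacle is the joint choice of $\pi$, of the bijection, and of $P_1$. By Remark \ref{P1}, the whole procedure hinges on the existence of an extension of $C_S$ by $M_1$ satisfying condition (2); such an extension need not exist for an arbitrary $\pi$, and when it fails the perversity function or the bijection must be revised by trial and error. Worse, by Remark \ref{sacrif}, if any $X^{-1}_T$ contains a truly stacked (as opposed to honestly relatively $Q$-projective) summand, the output of \texttt{FinalStabEq} may be incorrect; before trusting the comparison I would need to verify that, for the chosen data, every indecomposable summand of each $X^{-1}_T$ has a single vertex $Q < D$. Confirming that this pathology does not occur for $\Omega^{+}_{8}(2)$ at $\ell = 5$, in contrast to the still-open cases ${}^2F_4(2).2$, $Sp_8(2)$ and ${}^3D_4(2)$ mentioned there, is the step I expect to absorb the bulk of the case-by-case work.
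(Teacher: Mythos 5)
This statement is a \emph{conjecture}; the paper neither proves it nor claims to, and it remains open. Your proposal does not prove it either: what you have sketched is the paper's argument for Theorem \ref{mine}, i.e.\ that the principal $5$-block of $\Omega^{+}_{8}(2)$ satisfies the Brou\'e--Rickard conjecture via a perverse derived equivalence. That is a different and strictly weaker assertion, for two independent reasons.

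First, the geometric conjecture is a universal statement about all finite groups of Lie type $G=\GG(q)$ with $\ell \nmid q$; establishing a derived equivalence for one particular group at one particular prime cannot close it. Second, and more importantly, even for $\Omega^{+}_{8}(2)$ the argument you outline does not touch the geometric content. The conjecture asserts that the perverse equivalence is induced by the complex $R\Gamma(Y_{\kappa/d},k)$ coming from a Deligne--Lusztig variety, together with the statement that each unipotent character occurs in a single cohomology degree of $H^{\bullet}(Y_{\kappa/d},K)$. The paper's strategy (Proposition \ref{forestanera}, \texttt{FinalStabEq}, \texttt{PerverseEq}) deliberately avoids the variety: it constructs Rouquier's stable-equivalence bimodule $C$ as in Section \ref{decc} — which is a completely different object from $R\Gamma(Y_{\kappa/d},k)$ — lifts the resulting stable equivalence to a perverse derived equivalence, and only borrows the conjectural formula (\ref{perversity}) as an \emph{input} for the perversity function. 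Nothing in that procedure shows that the resulting equivalence is the one coming from $Y_{\kappa/d}$, nor does it prove disjointness of supports in $H^{\bullet}(Y_{\kappa/d},K)$; as the paper itself notes, $H^{\bullet}(Y_{\kappa/d},K)$ is ``too hard to manage'' computationally. So even if every step you describe is carried out and succeeds, you obtain Theorem \ref{mine}, not the geometric conjecture. The honest conclusion is that your proposal addresses the wrong statement.
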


This conjecture gives the precise source of the perversity function providing the perversity equivalence that we rely on, but still there is no way to find it algorithmically, as we are still supposed to pass through $H^{\bullet}(Y_{\kappa/d},K)$. The decisive fact is that, conjecturally, we indeed have a relatively simple formula for $\pi_{\kappa/d}$, and this is the same formula that we will use to run our algorithm:
\begin{conjecture}\emph{\textbf{(Craven - 2012, \cite{Dav2})}}\label{conjpi}
Let $\chi \in \Uch(B_0(G))$ and let $f=f_{\chi}$ be its degree polynomial.
The perversity function from Conjecture \ref{istheone} is:
\begin{equation*}
\pi_{\kappa/d}(\chi)= \frac{\kappa}{d}(a(f)+\deg(f))+\phi_{\kappa/d}(f),
\end{equation*}
where $a(f)$ is the multiplicity of the root $q=0$, and $\phi_{\kappa/d}(f)$ is a number depending on the remaining roots of $f$.
\end{conjecture}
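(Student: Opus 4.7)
The plan is to attack Conjecture \ref{conjpi} by unpacking the (still conjectural) geometric origin of the perversity function through the cohomology of the Deligne--Lusztig variety $Y_{\kappa/d}$. Granting Conjecture \ref{istheone}, the value $\pi_{\kappa/d}(\chi)$ is the unique cohomological degree in which the unipotent character $\chi$ appears in $H^{\bullet}(Y_{\kappa/d},K)$, so my first step would be to express that degree in terms of the generic degree polynomial $f_\chi$ by combining the Deligne--Lusztig character formula with Lusztig's description of the Frobenius eigenvalues on $H^{*}_c(Y_{\kappa/d},\overline{\QQ}_\ell)$. The leading term $\frac{\kappa}{d}(a(f)+\deg f)$ is then naturally read as $\frac{\kappa}{d}$ times Lusztig's $A$-function of $\chi$, and it matches the dimension $\kappa\ell(w)/d$ of the variety attached to a $d$-regular element $w$ of the Weyl group.

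Next I would fix a factorisation $f_\chi(q)=q^{a(f)}\prod_{i}(q-q^{c_i}\zeta_i)$ with $\zeta_i$ roots of unity and $c_i\in\QQ_{>0}$, and then \emph{define} $\phi_{\kappa/d}(f)$ as a signed sum of fractional parts (or ceilings) of the quantities $\kappa c_i/d$, following the pattern already confirmed in the Coxeter situation. The conjectural identity then reduces to a purely combinatorial comparison between this sum and Lusztig's torsion contribution to the Frobenius eigenvalue on the $\chi$-isotypic component. Given the eigenvalue, the cohomological degree is pinned down by Deligne's purity theorem: a class pure of weight $w$ supported on a $j$-dimensional stratum contributes in degree $2j-w$, and the polynomial $f_\chi$ records exactly this weight datum. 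In the Coxeter case ($d$ equal to the Coxeter number) this matching is essentially Lusztig's theorem on the cohomology of the Coxeter variety, and the base case of the induction on the factorisation is already supplied there.

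The chief obstacle is the uniqueness hypothesis underlying Conjecture \ref{istheone}: outside of Coxeter $d$ and the cases treated by Brou\'e--Michel, Digne--Michel--Rouquier and Dudas--Rouquier, the disjointness of Frobenius eigenvalues on distinct unipotent isotypic components of $H^{*}_c(Y_{\kappa/d})$ is unproved, so even singling out an unambiguous integer $\pi_{\kappa/d}(\chi)$ is at the edge of current knowledge. A realistic strategy is therefore to prove the formula \emph{conditionally} on this disjointness, thereby reducing Conjecture \ref{conjpi} to a concrete combinatorial statement about $f_\chi$, and then to verify it unconditionally in the cases where the cohomology of $Y_{\kappa/d}$ has been explicitly computed---precisely the situations in which the output of \texttt{PerverseEq} can be compared with $L$ via Proposition \ref{forestanera}, as in the application to $\Omega^{+}_8(2)$ carried out in the sequel.
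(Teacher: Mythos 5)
The statement you are attacking is labelled a \emph{Conjecture} in the paper, and the paper does not prove it. The formula
\begin{equation*}
\pi_{\kappa/d}(\chi)=\frac{\kappa}{d}\bigl(a(f)+\deg(f)\bigr)+\phi_{\kappa/d}(f)
\end{equation*}
is given (following Craven \cite{Dav2}) purely as a \emph{definition}, with $\phi_{\kappa/d}(f)$ spelled out exactly as half the multiplicity of $1$ as a root of $f$ plus $|\Arg_{\kappa/d}(f)|$, where $\Arg_{\kappa/d}(f)$ is the multiset of arguments $\theta+2\pi h\in[0,2\pi\kappa/d]$ of the nonzero, non-unit roots of $f$. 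The conjectural content is that this explicitly defined integer-valued function coincides with the cohomological-degree function of Conjecture \ref{istheone}. The paper explicitly flags this as still open (``some of the ground where our algorithm has taken roots is still at a conjectural level'') and proceeds by using the formula as heuristic \emph{input} to \texttt{PerverseEq}; the only verification is a posteriori, group by group, via Proposition \ref{forestanera}. Theorem \ref{mine} never relies on Conjecture \ref{conjpi} being true — it only needs a perversity function that makes the algorithm succeed and match the stable equivalence.

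Your sketch does capture the intended geometric origin (Frobenius eigenvalues on $H^\bullet_c(Y_{\kappa/d})$, Deligne purity, the leading term as a dimension), but it is a speculative roadmap rather than a proof, and you acknowledge as much. Two points to tighten if you pursue it: first, $\frac{\kappa}{d}(a(f)+\deg f)$ is $\frac{\kappa}{d}$ times the \emph{sum} $a(\chi)+A(\chi)$ of Lusztig's two invariants, not $\frac{\kappa}{d}A(\chi)$ alone; second, you propose to re-\emph{define} $\phi_{\kappa/d}$ via fractional parts/ceilings of $\kappa c_i/d$ and match it to a ``torsion contribution,'' whereas the paper's $\phi_{\kappa/d}$ is already pinned down by the $\Arg_{\kappa/d}$ counting rule, so your version would need to be proved equivalent to that one. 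But the essential mismatch is methodological: the paper states and uses a conjecture, and you are attempting a proof of it; nothing in the paper can be compared to your argument because no proof is offered there, and a full proof along your lines would require the currently-open disjointness of Frobenius eigenvalues across unipotent isotypic components.
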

This conjectured result would provide a viable way to get our perversity function $\pi_{\kappa/d}$ that our algorithm strongly relies on. The list of degree polynomials related to the set of unipotent characters of a fixed block (e.g the principal) of a fixed group of Lie type can be found in the literature, for example \cite[\S 13]{Car}, or via GAP 3 \cite{gap}; finding the value of $a(f)$ and $\phi_{\kappa/d}(f)$ is also easy. It is worth remarking that some of the ground where our algorithm has taken roots is still at a conjectural level; still, there is no reason why we cannot try to use this conjectural data as an input for our algorithm, and as we will see in the following, this choice for our input has provided the expected result for $\Omega_8^{+}(2)$. 

\subsection{Bijection and perversity function}\label{nmoved} As we can see in Definition \ref{defperve}, a perverse equivalence between $B_0(G)$ and $B_0(H)$ is characterised by two data: a perversity function $\pi$ and a bijection between the sets $\mathcal{S}_{B_0(G)}$ and $\mathcal{S}_{B_0(H)}$.
From the computational point of view, both the bijection and the perversity function are inputs, and therefore finding a suitable $\pi$ together with a bijection is necessary in order to make the algorithm return a perverse equivalence. As we have explained above, there is no general formula for the perversity function related to perverse equivalences between blocks of $k G$ and $k H$, but such a formula exists at a conjectural level when $G$ is a group of Lie type. This formula is computed via the degree polynomials of the unipotent ordinary characters of $G$, and then extended to the simple $B_0(H)$-modules. An introduction to unipotent characters and the related theory can be found in \cite{Car}, together with a list of degree polynomials for some simple groups of Lie type.

Let us explain it in more detail. We assume that $G$ is a group of Lie type.
The method consists of finding bijections 
\begin{equation}\label{biggi}
\Uch(B_0(G)) \xlongrightarrow{ \ \ 1 : 1 \ \ } \mathcal{S}_{B_0(G)} \xlongrightarrow{ \ \ 1 : 1 \ \ } \mathcal{S}_{B_0(H)}
\end{equation}
between those three sets, and then defining a perversity function from a suitable function $\Uch(B_0(G)) \to \ZZ_{\geq 0}$. By an abuse of notation, we will refer to $\pi$ as the perversity function defined either on $\Uch(B_0(G))$ or $\mathcal{S}_{B_0(H)}$, under the assumption that a bijection between $\Uch(B_0(G))$ and $\mathcal{S}_{B_0(H)}$ has been fixed. In the following, we describe the two bijections appearing in (\ref{biggi}).

\begin{itemize}
\item The bijection $\Uch(B_0(G)) \xlongrightarrow{   1 : 1   } \mathcal{S}_{B_0(G)}$ is defined as follows: we order the set $\Uch(B_0(G))$ according to the given perversity function, namely $\chi \leq \chi' \iff \pi(\chi) \leq \pi(\chi')$. If two or more characters have the same value, we can arbitrarily fix an ordering for them. Permuting the rows of a decomposition matrix with respect to this order, it turns out - in the case that we will consider - that there exists a unique way to permute the list of the simple $B_0(G)$-modules (the columns) to obtain a unitriangular matrix. This unitriangular structure of the decomposition matrix gives the required bijection between $\mathcal{S}_{B_0(G)}$ and $\Uch(B_0(G))$. 

\item A more tricky part consists of finding the right bijection between $\mathcal{S}_{B_0(G)}$ and $\mathcal{S}_{B_0(H)}$. This is the bijection which is carried by the definition of perverse equivalence. In \cite{Dav2} we have a way to find the correct bijection in the case of cyclic Sylow subgroup only. In our setting, our Sylow $\ell$-subgroup is $C_{\ell} \times C_{\ell}$, however the number of modules that we consider is limited, therefore we can find the correct bijection using trial and error (the bijection will be correct if it makes the algorithm work as we will explain). Some additional numerical information will reduce the possibilities a lot; for example, the underlying perfect isometry of the derived equivalence that we aim for would imply that:
\begin{equation}\label{benny}
(-1)^{\pi(T)}\dim(T) \equiv \chi(1) \mod \ell, 
\end{equation}
where $\chi \in \Uch(B_0(G))$ and $T \in \mathcal{S}_{B_0(H)}$ correspond under the resulting bijection between $\Uch(B_0(G))$ and $\mathcal{S}_{B_0(H)}$. Therefore, if the bijection between $\Uch(B_0(G))$ and $\mathcal{S}_{B_0(G)}$ has already been obtained, the numerical information coming from the relations (\ref{benny}) restricts the possible choice for the bijection $\mathcal{S}_{B_0(G)} \xlongrightarrow{ \ \ 1 : 1 \ \ } \mathcal{S}_{B_0(H)}$.
\end{itemize}

\vspace{2mm}

Following  \cite[\S 1, \S 7]{Dav2}, we explain how the (conjecturally) valid perversity function in Conjecture \ref{conjpi} can be obtained. 

\vspace{2mm}

Let $z=r e^{i \theta}$ be a non-zero complex number and $\kappa, d$ be positive integers such that $(\kappa,d) =1$. The set $\Arg_{\kappa/d}(z)$ consists of all the positive numbers which are an argument for $z$ and are smaller than $\frac{2 \pi \kappa}{d}$, namely 
\begin{equation*} \Arg_{\kappa/d}(z) = \left\{ \theta +2 \pi h \mid h \in \ZZ, \ 0 \leq \theta +2 \pi h \leq \frac{2 \pi \kappa}{d} \right\}.
\end{equation*}
For a polynomial $f$, we denote by $\Arg_{\kappa/d}(f)$ the multiset produced by the union of all $\Arg_{\kappa/d}(z)$, where $z$ runs over all the roots of $f$ different from $0$ and $1$, counting their multiplicity. The multiplicity of $0$ as a root is denoted by $a(f)$; some authors call it the ``valuation'' of the polynomial, in other words it is the degree of the trailing term of $f$. The root $1$ is excluded as we want to count it with half its multiplicity, and we define $\phi_{\kappa/d}(f)$ as the sum of half the multiplicity of $1$ as a root of $f$ and $|\Arg_{\kappa/d}(f)|$. 

According to the Deligne-Lusztig theory (for example, see \cite{Lusz}), a group of Lie type $G$ descends from a more general object called a generic group of Lie type, often denoted by $\mathbb{G}$; this is a family of groups of Lie type parametrised by numbers of the form $q=p^s$, where $p$ is prime and $s \geq 1$ is an integer, so we can specialise $\mathbb{G}$ to the prime power $q$, and write $G=\mathbb{G}(q)$. We will not focus on the character theory of groups of Lie type in this work, and it is enough to mention that the number of unipotent characters of $G$ is actually determined at the level of $\mathbb{G}$, and in particular a unipotent character of $G$ descends from a more general object called a \emph{generic character} of $\mathbb{G}$, which depends on the type of Dynkin diagram as well as the Frobenius map acting on the diagram. To a generic unipotent character $\chi \in \Uch(\GG)$, we can associate a polynomial $f =f_{\chi} \in \QQ[x]$ such that $f(q)=\deg{\chi |_{q}}$, and $\chi |_{q}$ is the character of $G$ descending from the generic $\chi$.
Following \cite[Def. 1.2]{Dav2}, we define the  map which is conjectured to be a valid \emph{perversity function} as 
\begin{equation}\label{perversity}
\pi_{\kappa/d}(\chi |_{q}):=\frac{\kappa}{d}(a(f_{\chi})+\deg(f_{\chi}))+\phi_{\kappa/d}(f_{\chi}),
\end{equation}
where $d$ is the order of $q$ modulo $\ell$, and $\kappa$ is a positive integer coprime to $d$. In principle, it is not clear why this expression should return an integer when evaluated on unipotent characters; in \cite[Th. 1.5]{Dav2} we have a result which proves the integrality of $\pi_{\kappa/d}(\chi |_{q})$ under some conditions. Furthermore, the polynomial $f(q)$ is the product of cyclotomic polynomials and a factor of the form $a q^N$, for $N \in \ZZ_{\geq 0}$ and $a \in \QQ$, and this will make it easier to write an algorithm producing $\pi_{\kappa/d}(\chi |_{q})$ given $\chi |_{q},\kappa,d$.

\begin{example}
Let $\chi$ be the generic trivial character of $\GG$, and hence $f_{\chi}=1$. By the expression (\ref{perversity}) we have $\pi_{\kappa/d}(\chi |_{q})=0$, since $a(f_{\chi})=\deg(f_{\chi})=\phi_{\kappa/d}(f_{\chi})=0$.
\end{example}

\section{Brou\'e's Conjecture for $\Omega^{+}_8(2)$}

We are going to apply Proposition \ref{forestanera} to prove Theorem \ref{mine}.
\subsection{The simple group $G:=\Omega^{+}_8(2)$} We have $|G|=2^{12} \cdot 3^5 \cdot 5^2 \cdot 7$. When considering principal blocks, Conjecture \ref{splee} has to be checked for the case $\ell=5$ only, since both the $2$-Sylow and $3$-Sylow subgroups of $G$ are not abelian, and the case $\ell=7$ is already known since the defect group is cyclic. 
Therefore we set $\ell:=5$, $D \in \Syl_{\ell}(G)$ and $H:=N_G(D)$. We have $D \simeq C_{\ell} \times C_{\ell}$, therefore the algorithmic strategy that we outlined above applies for the principal $5$-block $B_0(G)$. 

\vspace{2mm}

There are three conjugacy classes of subgroups of $H$ of order $5$, represented by $Q_1,Q_2,Q_3$. For each of those, we have $\bar{C}_G(Q_i) \simeq A_5$ and $\bar{C}_H(Q_i) \simeq D_{10}$, the alternating group and the dihedral group of order $10$ respectively, and the subgroups $\bar{C}_G(Q_i)$ and $\bar{C}_H(Q_i)$ have been defined in Section \ref{decc}. This information will be necessary to determine the stable equivalence between $B_0(G)$ and $B_0(H)$.

Before reporting the result of the algorithms, we summarise the main information in the following table: perversity function, bijection between $\mathcal{S}_{B_0(G)}$ and $\mathcal{S}_{B_0(H)}$, unipotent characters. Each $C_i$ denotes the Green correspondent of $S_i$ down to $H$, and its dimension is reported.

\begin{center}
\begin{tabular}{| c | c | c| c| c| c|} 
 \hline
$\pi_{1/4}$ & $\chi$ & Polynomial& $kH$-mod & $B_0(G)$-mod & dim $C_i$ \\
 \hline
 0 & $1_1$ & 1& $T_{1}=1_{1}$     &    $S_{1}=1_1$    &      $\dim(C_1)=1$   \\  
 3 & $84_1$& $q^2 \Phi_3(q) \Phi_6(q)$ &$T_{2}=1_{2}$     &    $S_{2}=83_1$    &     $ \dim(C_2)=8$   \\ 
 3 & $84_2$& $q^2 \Phi_3(q)\Phi_6(q)$ &$T_{3}=1_{3}$        &   $S_{3}=83_2$  &       $ \dim(C_3)=8$  \\ 
 3 & $84_3$&  $q^2 \Phi_3(q)\Phi_6(q)$ & $T_{4}=1_{4}$      &  $S_{4}=83_3$ &           $ \dim(C_4)=8$   \\ 
 4 & $972_1$ & $ q^3\Phi_2(q)^4\Phi_6(q)/2$ & $T_{9}=2_{1}$      & $S_{9}=722_1$  &        $ \dim(C_9)=47$    \\ 
 5 & $28_1$ & $q^3\Phi_1(q)^4\Phi_3(q)/2$  & $T_{10}=2_{2}$   & $S_{10}=28_1$ &      $ \dim(C_{10})=28$    \\
 5 & $1344_1$ & $q^6 \Phi_3(q)\Phi_6(q)$ & $T_{5}=1_{5}$   & $S_{5}=539_1$  &          $ \dim(C_5)=64$    \\ 
 5 & $1344_2$ & $q^6 \Phi_3(q)\Phi_6(q)$ & $T_{6}=1_{6}$   & $S_{6}=539_2$  &              $ \dim(C_6)=64$  \\
 5 & $1344_3$ & $q^6 \Phi_3(q)\Phi_6(q)$ & $T_{7}=1_{7}$   & $S_{7}=539_3$  &               $ \dim(C_7)=64$ \\
 6 & $4096_1$ & $q^{12}$& $T_{8}=1_{8}$     & $S_{8}=1729_1$  &         $ \dim(C_8)=29$   \\ 
 \hline
\end{tabular}
\end{center}

\subsection{Representation theory of $G$}

The finite field $\mathbb{F}_5$ is a splitting field for $G$; in fact, it is easily checked on MAGMA that every simple $kG$-module can be realised over $\mathbb{F}_5$, and such two properties are equivalent (see, for instance, \cite[Th. 1.14.8, \S 1]{Link}). Therefore, we can carry out all our computations over $\mathbb{F}_5$. 

\vspace{1mm}

We have $10$ simple $B_0(G)$-modules $S_1, \dots, S_{10}$. Following the notation of \cite{Atlas}, we set $S_1 := k$, and
\begin{center}
$\begin{array}{c}
S_2=83_1\\
S_3=83_2\\
S_4=83_3\\
\end{array} \ \begin{array}{c}
S_5=539_1\\
S_6=539_2\\
S_7=539_3\\
\end{array} \ \begin{array}{c}
S_8=1729_1\\
S_9=722_1\\
S_{10}=28_1.\\
\end{array}$
\end{center}

Via MAGMA, we can find that $\text{Out}(G)  \cong S_3$, and it permutes the three modules of order $539$ and the three modules of order $83$. 
The Modular Atlas \cite{Atlas} provides a decomposition matrix of $G$ in characteristic $5$. This can be easily re-arranged in a unitriangular shape as follows (the non-unipotent characters are not reported here), and we also include the value of the perversity function that we will run our algorithm \texttt{PerverseEq} with:

\vspace{3mm}

\begin{equation*}
\begin{tabular}{ |c|c | c c c c c c c c c  c  |} 
 \hline
 \multicolumn{12}{|c|}{$B_0(G)$, $G=\Omega^{+}_8(2)$, $\ell=5$} \\
 \hline
$\pi_{1/4}$ & Unipotent Char & $S_1$  & $S_2$ & $S_3$ & $S_4$ & $S_9$ &$S_{10}$ &$S_5$ & $S_6$ &$S_7$ & $S_8$ \\
\hline
 0 & $1_1$ & 1  &   &  & & & & & &  & \\  
 3 & $84_1$  &1  & 1 &  & & & & & &  &\\ 
 3 & $84_2$   & 1  &  & 1 & & & & & &  & \\ 
 3 & $84_3$  & 1  &  &  &1 & & & & &  & \\ 
 4 & $972_1$  & 1  & 1 & 1 &1 &1 & & & &  & \\ 
 5 & $28_1$  &  &  &  & & & 1 & & &  & \\ 
 5 & $1344_1$   & &1  &  & & 1& &1 & &  & \\ 
 5 & $1344_2$   & &  & 1 & & 1& & &1 &  & \\ 
 5 & $1344_3$   & &  & & 1& 1& & & &1  & \\ 
 6 & $4096_1$ & &  &  & &1 &1 &1 &1 & 1 &1 \\ 
 \hline
\end{tabular}
\end{equation*}

\vspace{2mm}

\subsection{Representation theory of $H$}
Via MAGMA, we find $|H|=400$ and in particular $H \cong D \rtimes S$, where $S$ is the complex reflection group:
\begin{center}
$S \cong G(4,2,2) \cong \{s, t, u | s^2 = t^2 = u^2 = 1,  stu = tus = ust \}$. 
\end{center}
The group algebra $k H$ decomposes in one block only; it is a general fact that if a finite group $H$ contains a normal $\ell$-subgroup $R$ such that $C_H(R) \leq R$, then $kH$ has exactly one block. This is a consequence of \cite[Th.4, \S 15]{Alp} and of $R$ being contained in any defect group of any block, by \cite[Th.6, \S 13]{Alp} . In our case, this happens by choosing $R=D$. 

The algebra $k H=B_0(H)$ has $10$ simple modules, all absolutely simple, eight of them of dimension $1$ and two of dimension $2$. 
According to our labelling, $T_1, \dots, T_8$ have dimension $1$ and $T_9,T_{10}$ have dimension $2$; $T_1$ denotes the trivial module. 
When writing the socle structure of a module, $T_i$ is abbreviated to $i$. The labelling that we choose for the set of simple $B_0(H)$-modules is such that the socle series of the projective cover of the trivial module $T_1$ is:
\begin{equation*}
\mathcal{P}(1)=\begin{array}{c}
\ 1  \\
\ 10  \\
\ 2  \ 3  \ 4  \\
\ 9  \ 9  \\
\ 1  \ 1  \ 5  \ 6  \ 7  \\
\ 10  \ 10  \\
\ 2  \ 3  \ 4  \\
\ 9  \\
\ 1  \\
\end{array}
\end{equation*}
The three modules $T_2, T_3, T_4$ are permuted by $\text{Out}(H) \cong S_3$, and the same happens for $T_5,T_6,T_7$. We stipulate that $T_5:=T_4 \otimes T_3$,  $T_6:=T_2 \otimes T_4$,  $T_7:=T_3 \otimes T_2$; therefore, once we have distinguished $T_2,T_3,T_4$, we have distinguished $T_5,T_6,T_7$ as well. It remains to identify $T_8$, and this is the exterior square of $T_9$.

The three conjugacy classes of subgroups of order $5$ can be labelled by $Q_1,Q_2,Q_3$, where each $Q_i$ is a representative of each class.  A concrete mean of fixing this labelling is looking at the output of our algorithm \texttt{PerverseEq}; in the next chapter we will introduce $10$ complexes $X_i$ of $B_0(H)$-modules resulting from the application of \texttt{PerverseEq}. Looking at the complexes $X_2, X_3, X_4$ in degree $-1$, we see that three modules $R_1, R_2, R_3$ of dimension $10$ appear. We define $Q_i$ as a vertex of $R_i$ for $i=1,2,3$. This distinguishes each $Q_i$ from the other two. The structure of each $R_i$ is:

\begin{center} 
$R_{1}=\begin{array}{c}
\ 10  \\
\ 3  \ 4  \\
\ 9  \\
\ 1  \ 5  \\
\ 10  \\
\end{array}$ \ \ \
$R_{2}=\begin{array}{c}
\ 10  \\
\ 2  \ 4  \\
\ 9  \\
\ 1  \ 6  \\
\ 10  \\
\end{array}$ \ \ \
$R_{3}=\begin{array}{c}
\ 10  \\
\ 2  \ 3  \\
\ 9  \\
\ 1  \ 7  \\
\ 10  \\ 
\end{array}$ \ \ \ 
\end{center}

\vspace{3mm}

\subsection{Stable equivalence}

We perform the construction that we have described in Section \ref{decc} of the complexes $C \otimes_{kG} S$; we will actually build the term of degree $-1$, namely $T_Q \otimes_{k \bar{C}_G(Q)} S_{N_G(Q)}$. 
As we remarked at the beginning of the section, we have three conjugacy classes of subgroups of order $5$ in $H$. We can denote by $Q$ a generic subgroup of order $5$; the result from the construction of the stable equivalence is the same for each of those three, up to isomorphism. 

\vspace{2mm}

We recall the notation from Section \ref{decc} and from \cite[\S 3.3.1]{Dav}: $\bar{N}_G(Q)$ and $\bar{N}_H(Q)$ are complements of $Q$ inside $N_G(Q)$ and $N_H(Q)$ respectively, and they can be chosen such that $\bar{N}_H(Q) \leq \bar{N}_G(Q)$. We need $Q$-complements of centralisers as well, and then we take $\bar{C}_G(Q)= C_G(Q) \cap \bar{N}_G(Q)$ and $\bar{C}_H(Q)= C_H(Q) \cap \bar{N}_H(Q)$.

For each of the three $Q=Q_1,Q_2,Q_3$, we have $\bar{C}_H(Q) \cong D_{10}$, the dihedral group of order $10$, and $\bar{C}_G(Q) \cong A_5$, the alternating group of order $60$. Using MAGMA, we can check that as a $k [\bar{C}_H(Q) \times \bar{C}_G(Q)^{\text{opp}}]$-module, we have that $k\bar{C}_G(Q) \cong M_Q \oplus V$, where $M_Q$ and $V$ are indecomposable, $\dim(M_Q)=35$, $\dim(V)=25$, and only $M_Q$ lies in the principal block. So we have 
\begin{equation*}
e_{\bar{C}_H(Q)} k\bar{C}_G(Q)e_{\bar{C}_G(Q)} \cong M_Q.
\end{equation*}
In particular, no projective summand appears in this decomposition. As we have $\bar{N}_H(Q) \leq N_G(Q)$ and in this case $\bar{C}_G(Q) \mathrel{\unlhd} N_G(Q)$, then $\bar{N}_H(Q)$ normalises $\bar{C}_H(Q)$ and the action of $\bar{C}_H(Q) \times \bar{C}_G(Q)^{\text{opp}}$ on $k\bar{C}_G(Q)$ can be extended to a natural action of $N_{\Delta}= (\bar{C}_H(Q) \times \bar{C}_G(Q)^{\text{opp}}) \Delta \bar{N}_H(Q)$; it turns out that, as a $k N_{\Delta}$-module, $k\bar{C}_G(Q)$ does not decompose any further than $M_Q$ and $V$. So we conclude that 
\begin{equation}\label{projdec}
e_{\bar{C}_H(Q)} k\bar{C}_G(Q) e_{\bar{C}_G(Q)} \cong M_Q
\end{equation}
as a $k N_{\Delta}$-module.

\vspace{2mm}
The representation theory of $k\bar{C}_H(Q)$ and $k\bar{C}_G(Q)$ is briefly recalled: they decompose into one and two blocks respectively, and $k \bar{C}_H(Q)$ has two simple modules $1_1$, $1_2$, and $k\bar{C}_G(Q)$ has three simple modules $1_1,3_1,5_1$, where the first two of them belong to the principal block. Each simple module can be seen as a simple module for $k [\bar{C}_H(Q) \times \bar{C}_G(Q)^{\text{opp}}]$, where the original group acts as usual and the other factor acts trivially. The set of simple modules for $k [\bar{C}_H(Q) \times \bar{C}_G(Q)^{\text{opp}}]$ is indeed $1_1 \otimes 1_1, 1_2 \otimes 1_1, 1_1 \otimes 3_1,1_2 \otimes 3_1,1_1 \otimes 5_1,1_2 \otimes 5_1.$
The Brauer tree of the principal block of $k\bar{C}_G(Q)$ is:

\vspace{2mm}

\begin{center}
\begin{tikzpicture}[shorten >=1pt, auto, node distance=0.5cm,
   node_style/.style={circle,draw=black,font=\sffamily\Large\bfseries},
   node_stylee/.style={circle,draw=black,fill=black!90!,font=\sffamily\Large\bfseries},
   edge_style/.style={draw=black}]

    \node[node_style] (v1) at (-2,2) {};
    \node[node_style] (v2) at (2,2) {};
    \node[node_stylee] (v3) at (6,2) {};
    \draw[edge_style]  (v1) edge node{$1_1$} (v2);
    \draw[edge_style]  (v2) edge node{$3_1$} (v3);

    \end{tikzpicture}

\vspace{2mm}

\end{center}
We recall that the map $\gamma : \mathcal{S}_{B_0(\bar{C}_G(Q))} \to  \mathcal{S}_{B_0(\bar{C}_H(Q))}$ described by the relation \ref{penn} is used to determine a projective cover of $M_Q$. Via MAGMA, we can find that 
$\gamma$ sends the trivial module to the trivial module and $3_1$ to $1_2$.
Therefore, as expected according to Section \ref{decc}, our computation in MAGMA confirms that a projective cover of $M_Q$ is of the form 
\begin{equation*}
\mathcal{P}(1_1 \otimes 1_1) \oplus \mathcal{P}(1_2 \otimes 3_1) \twoheadrightarrow M_Q. 
\end{equation*}
The subset $\mathcal{E}$  of $\mathcal{S}_{B_0(\bar{C}_G(Q))}$ is defined by looking at the Brauer tree of $B_0(\bar{C}_G(Q))$: the distance $d$ between the exceptional vertex and the edge of the trivial module is $1$; so the subset $\mathcal{E} \subset \mathcal{S}_{B_0(\bar{C}_G(Q))}$ such that the distance of the edge from the exceptional vertex is $1+1 = 0 \ (\text{mod} \ 2)$ is formed of $3_1$ only. 

We can now run the algorithm \texttt{FinalStabEq}; this would compute the image of each element in $\mathcal{S}_{B_0(G)}$ under the stable equivalence $L$ in Proposition \ref{forestanera}; if the result matches with the output of the algorithm \texttt{PerverseEq}, then by Proposition \ref{forestanera} we have a splendid derived equivalence between $B_0(G)$ and $B_0(H)$.

As no projective summand appears in the decomposition \ref{projdec}, we deduce that $T_Q=U_Q=\mathcal{P}(1_2 \otimes 3_1)$, so we have to compute $T_Q \otimes_{k\bar{C}_G(Q)} S_{N_G(Q)}$ for every $Q=Q_1,Q_2,Q_3$ and for every simple $B_0(G)$-module $S=S_1, \dots, S_{10}$. 
Our computations show that:
\begin{equation*}
\begin{split}
S&=S_1, \ \ \  \bigoplus_{Q=Q_1,Q_2,Q_3} T_Q \otimes_{k\bar{C}_G(Q)} S_{N_G(Q)} \cong \{0 \}; \\
S&=S_2, \ \ \ \bigoplus_{Q=Q_1,Q_2,Q_3} T_Q \otimes_{k\bar{C}_G(Q)} S_{N_G(Q)} \cong R_1 \oplus \{0 \} \oplus \{0\} \cong R_1; \\
S&=S_{3}, \ \ \ \bigoplus_{Q=Q_1,Q_2,Q_3} T_Q \otimes_{k\bar{C}_G(Q)} S_{N_G(Q)} \cong  \{0 \} \oplus R_2 \oplus \{0\} \cong R_2; \\
S&=S_4, \ \ \ \bigoplus_{Q=Q_1,Q_2,Q_3} T_Q \otimes_{k\bar{C}_G(Q)} S_{N_G(Q)} \cong \{0\} \oplus  \{0 \} \oplus R_3 \cong R_3;\\
S&=S_5, \ \ \ \bigoplus_{Q=Q_1,Q_2,Q_3} T_Q \otimes_{k\bar{C}_G(Q)} S_{N_G(Q)} \cong R_1 \oplus U_2 \oplus U_3; \\
S&=S_6, \ \ \ \bigoplus_{Q=Q_1,Q_2,Q_3} T_Q \otimes_{k\bar{C}_G(Q)} S_{N_G(Q)} \cong U_1 \oplus R_2 \oplus U_3; \\
S&=S_7, \ \ \ \bigoplus_{Q=Q_1,Q_2,Q_3} T_Q \otimes_{k\bar{C}_G(Q)} S_{N_G(Q)} \cong U_1 \oplus U_2 \oplus R_3; \\
S&=S_8, \ \ \ \bigoplus_{Q=Q_1,Q_2,Q_3} T_Q \otimes_{k\bar{C}_G(Q)} S_{N_G(Q)} \cong U^{*}_1 \oplus U^{*}_2 \oplus U^{*}_3; \\
S&=S_9, \ \ \ \bigoplus_{Q=Q_1,Q_2,Q_3} T_Q \otimes_{k\bar{C}_G(Q)} S_{N_G(Q)} \cong U^{*}_1 \oplus U^{*}_2 \oplus U^{*}_3; \\
S&=S_{10}, \ \ \ \bigoplus_{Q=Q_1,Q_2,Q_3} T_Q \otimes_{k\bar{C}_G(Q)} S_{N_G(Q)} \cong R^{*}_1 \oplus R^{*}_2 \oplus R^{*}_3.
\end{split}
\end{equation*}
where all these isomorphisms are in the stable category, namely up to projective summands. Here, $U_i$ are indecomposable $B_0(H)$-modules of dimension $90$ with vertex $Q_i$ and with $3$-dimensional source. Their structures are:
\begin{center}
$U_1=\begin{array}{c}
\ 6  \ 7  \\
\ 10  \ 10  \\
\ 2  \ 2  \ 3  \ 4  \ 8  \ 8  \\
\ 9  \ 9  \ 9  \\
\ 1  \ 5  \ 6  \ 6  \ 7  \ 7  \\
\ 10  \ 10  \\
\ 2  \ 8  \\
\end{array}$ \ \ \ \
$U_2=\begin{array}{c}
\ 5  \ 7  \\
\ 10  \ 10  \\
\ 2  \ 3  \ 3  \ 4  \ 8  \ 8  \\
\ 9  \ 9  \ 9  \\
\ 1  \ 5  \ 5  \ 6  \ 7  \ 7  \\
\ 10  \ 10  \\
\ 3  \ 8  \\
\end{array}$ \ \ \ \
$U_3=\begin{array}{c}
\ 5  \ 6  \\
\ 10  \ 10  \\
\ 2  \ 3  \ 4  \ 4  \ 8  \ 8  \\
\ 9  \ 9  \ 9  \\
\ 1  \ 5  \ 5  \ 6  \ 6  \ 7  \\
\ 10  \ 10  \\
\ 4  \ 8  \\
\end{array}$ \ \ \ \
\end{center}

It remains to run the algorithm \texttt{PerverseEq} in order to compare this result and possibly apply Proposition \ref{forestanera}.

\subsection{Perverse equivalence} In the following, each complex $X_i$ will denote the algorithmic output corresponding to the simple $B_0(H)$-module $T_i$, for each $i=1,\dots, 10$. As $\pi(T_1)=0$, the complex $X_1$ is just: 
\begin{center}
$X_1: 0 \to T_1  \to 0.$
\end{center}
The next value of $\pi$ is $3$, with $T_2,T_3$ and $T_4$. The algorithm constructs:
\vspace{2mm}
\begin{center}
$X_2: 0 \to \mathcal{P}(2) \to \mathcal{P}(10) \to \mathcal{P}(5)  \oplus R_{1} \twoheadrightarrow C_2 \to 0,$

$X_3: 0 \to \mathcal{P}(3) \to \mathcal{P}(10) \to \mathcal{P}(6)  \oplus R_{2} \twoheadrightarrow C_3 \to 0$, \\

$X_4: 0 \to \mathcal{P}(4) \to \mathcal{P}(10) \to \mathcal{P}(7)  \oplus R_{3} \twoheadrightarrow C_4 \to 0$. \\
\end{center}
In the following, we get:
\begin{equation*}
\begin{split}
X_9: \  & \mathcal{P}(9) \to \mathcal{P}(8) \oplus \mathcal{P}(10) \oplus \mathcal{P}(10) \to  \\  & \to \mathcal{P}(2)\oplus \mathcal{P}(3) \oplus \mathcal{P}(4) \oplus \mathcal{P}(5) \oplus \mathcal{P}(6) \oplus \mathcal{P}(7) \oplus \mathcal{P}(8) 
  \to  \\  & \to  \mathcal{P}(10) \oplus U^{*}_1 \oplus U^{*}_2 \oplus U^{*}_3 \twoheadrightarrow C_9 \to 0.
\end{split}
\end{equation*}
From $X_2$,$X_3$,$X_4$ we can see how $5,6,7$ are permuted. Now we move to the triple $T_5$, $T_6$, $T_7$. 
\begin{equation*}
\begin{split}
X_5: \  & \mathcal{P}(5) \to   \mathcal{P}(8) \oplus \mathcal{P}(10) \to \mathcal{P}(6)\oplus \mathcal{P}(7)\oplus \mathcal{P}(9)  \to \\ & \to \mathcal{P}(5)\oplus \mathcal{P}(6)\oplus \mathcal{P}(7)\oplus \mathcal{P}(10) \to \\
& \to   \mathcal{P}(8) \oplus \mathcal{P}(9) \oplus R_{1} \oplus U_2 \oplus U_3 \twoheadrightarrow C_5 \to 0.
\end{split}
\end{equation*}
\begin{equation*}
\begin{split}
X_6: \ &  \mathcal{P}(6) \to  \mathcal{P}(8) \oplus \mathcal{P}(10) \to \mathcal{P}(5)\oplus \mathcal{P}(7)\oplus \mathcal{P}(9)  \to \\ & \to \mathcal{P}(5)\oplus \mathcal{P}(6)\oplus \mathcal{P}(7)\oplus \mathcal{P}(10) \to \\
&  \to   \mathcal{P}(8) \oplus \mathcal{P}(9) \oplus R_{2} \oplus U_1 \oplus U_3 \twoheadrightarrow C_6 \to 0.
\end{split}
\end{equation*}

\begin{equation*}
\begin{split}
X_7: \ & \mathcal{P}(7)  \to  \mathcal{P}(8) \oplus \mathcal{P}(10) \to \mathcal{P}(5)\oplus \mathcal{P}(6)\oplus \mathcal{P}(9)  \to \\ & \to \mathcal{P}(5)\oplus \mathcal{P}(6)\oplus \mathcal{P}(7)\oplus \mathcal{P}(10) \to \\
&  \to   \mathcal{P}(8) \oplus \mathcal{P}(9) \oplus R_{3} \oplus U_1 \oplus U_2 \twoheadrightarrow C_7 \to 0.
\end{split}
\end{equation*}
Finally:
\begin{equation*}
\begin{split}
X_{10}: \  \mathcal{P}(10) & \to  \mathcal{P}(5) \oplus \mathcal{P}(6) \oplus \mathcal{P}(7) \to \mathcal{P}(5)\oplus \mathcal{P}(6)\oplus \mathcal{P}(7)\oplus \mathcal{P}(8) \to \\
& \to \mathcal{P}(8) \oplus \mathcal{P}(9) \oplus \mathcal{P}(9) \to \mathcal{P}(10) \oplus R^{*}_{1} \oplus R^{*}_{2} \oplus R^{*}_{3} \twoheadrightarrow C_{10} \to 0.
\end{split}
\end{equation*}

\begin{equation*}
\begin{split}
X_8: \  \mathcal{P}(8) & \to  \mathcal{P}(8) \oplus \mathcal{P}(8) \to \mathcal{P}(5)\oplus \mathcal{P}(6)\oplus \mathcal{P}(7)  \to \mathcal{P}(10)\oplus \mathcal{P}(10) \to \\
&  \to   \mathcal{P}(8) \oplus \mathcal{P}(9) \oplus \mathcal{P}(9) \to U^{*}_1 \oplus U^{*}_2 \oplus U^{*}_3 \twoheadrightarrow C_8 \to 0.
\end{split}
\end{equation*}
In the following, we write the table of cohomology that could be used to reconstruct the uni-triangular structure of the decomposition matrix, as we described at the end of Section \ref{atee}.
For compactness, we have set $A:=2/3/4/9/9/10/5/6/7/8$ (see complex $X_8$).
\begin{equation}\label{scle}
\begin{tabular}{| c| c| c c c c c c |c|}
 \hline
$X_i$ & $\pi_{1/4}$ &  $H^{-6}$ &  $H^{-5}$ &  $H^{-4}$ &  $H^{-3}$ &  $H^{-2}$ &  $H^{-1}$ & Total \\
 \hline
 $X_2$ & 3 &     &      &    & 2&  & 1 & 2-1     \\  
 $X_3$ &  3 &   &        &    & 3& &1 & 3-1   \\ 
  $X_4$&   3 &     &   &     & 4& & 1&  4-1  \\ 
$X_9$ &  4&       &   &    2/3/4/9 & & $1 \oplus 1$ & &  9-4-3-2+1+1  \\ 
$X_{10}$ &  5 &      &   1/10&     1& & & &  10  \\ 
$X_5$ &   5 &     &  3/4/9/5 &     &1 & & &  5-9+3+4-1  \\ 
$X_6$ &   5 &     &   2/4/9/6&     &1 & & &   6-9+2+4-1 \\ 
$X_7$ &   5  &    &  2/3/9/7 &     & 1& & &    7-9+2+3-1\\ 
$X_{8}$ &   6 & $A$  &   & 1    & & & & 1-2-3-4-5-6-7+8+9+9-10  \\ 
 \hline
\end{tabular}
\end{equation}

\vspace{3mm}

We can finally conclude that: 
\begin{theorem}
The principal blocks $B_0(G)$ and $B_0(H)$ are splendidly derived equivalent. 
\end{theorem}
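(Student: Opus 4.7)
The plan is to invoke Proposition \ref{forestanera} with the perversity function $\pi_{1/4}$ and the bijection $S_i \leftrightarrow T_i$ recorded in the summary table of this section. Two hypotheses must be verified for every $i \in \{1,\dots,10\}$: first, that the complex $X_{T_i}$ produced by \texttt{PerverseEq} satisfies the success criteria of Section \ref{atee}, and second, that $X_{T_i}$ is stably isomorphic to $L(S_i) \cong C \otimes_{kG} S_i$, the output of \texttt{FinalStabEq}. Since both algorithms have already been executed and their outputs are listed explicitly, the proof reduces to a direct comparison.

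For the first hypothesis, I would read off, for each of the ten complexes $X_1, \dots, X_{10}$ displayed above, the module $P_0$ in degree $0$ and check that it is the Green correspondent $C_{S_i}$ of the matched simple module (as tabulated in the column ``dim $C_i$''); for $X_1$, $P_0$ is the trivial module and this is immediate. For the stacked relatively projective condition on $P_1$, the indecomposable summands fall into two classes: projective covers $\mathcal{P}(-)$, which are trivially stacked relatively projective, and the modules $R_1, R_2, R_3, U_1, U_2, U_3$ (and their duals), whose vertices are by construction the proper subgroups $Q_1, Q_2, Q_3 < D$ fixed in the subsection on the representation theory of $H$. The cohomology condition encoded by (\ref{bayes}) is enforced by \texttt{PerverseEq} by construction, and the table (\ref{scle}) serves as a consistency check that reproduces the rows of the fixed unitriangular decomposition matrix via the inductive formula (\ref{annina}).

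For the second hypothesis, I would compare the degree $-1$ term of each $X_{T_i}$ with $C \otimes_{kG} S_i$ computed by \texttt{FinalStabEq}; in every case the two modules differ only by projective summands among $\mathcal{P}(5), \dots, \mathcal{P}(10)$ (for instance, $X_2$ carries $\mathcal{P}(5) \oplus R_1$ against $R_1$, $X_5$ adds $\mathcal{P}(8) \oplus \mathcal{P}(9)$ to $R_1 \oplus U_2 \oplus U_3$, and for $X_8$ the outputs agree on the nose as $U_1^{*} \oplus U_2^{*} \oplus U_3^{*}$). Because projectives vanish in the stable category, and by Remark \ref{thesamm} the degree $0$ terms can be reduced to the Green correspondents, this gives $X_{T_i} \simeq L(S_i)$ in $\underline{\mathrm{mod}}(B_0(H))$ for every $i$. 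With both hypotheses of Proposition \ref{forestanera} verified, the proposition produces a perverse derived equivalence between $B_0(G)$ and $B_0(H)$, which is splendid because the complex $C$ of Rouquier \cite[\S 5.5]{Rouq} is splendid by construction (cf.\ Proposition \ref{induc}). The genuine difficulty is not this final verification but the preceding choice of $\pi_{1/4}$ and of the bijection $\mathcal{S}_{B_0(G)} \to \mathcal{S}_{B_0(H)}$ making both conditions hold at once; formula (\ref{perversity}) on the unipotent characters together with the congruence (\ref{benny}) already pin the data down, and the computations above confirm that this choice works.
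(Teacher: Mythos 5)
Your proposal is correct and takes essentially the same route as the paper: invoke Proposition \ref{forestanera}, observe that the terms of $X_i$ in degrees $\leq -2$ are projective and the degree-$0$ term is the Green correspondent, and reduce the stable comparison $C\otimes_{kG}S_i \cong X_i$ to matching the degree $-1$ terms up to projective summands — which the outputs of \texttt{FinalStabEq} and \texttt{PerverseEq} confirm. Your added remarks (that the $R_i$, $U_i$ have vertex $Q_i$ and hence $P_1$ is a sum of stacked relatively projective modules, and that splendidness comes from the splendid Rouquier complex $C$ via Proposition \ref{induc}) are consistent with, and slightly more explicit than, the paper's own short proof.
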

\begin{proof}
By Proposition \ref{forestanera}, it is enough to check that for each $S=S_i$, the following isomorphisms in the stable category hold:

\begin{equation*} 
C \otimes_{kG} S_{i} \cong X_i \ \ \ \text{in} \ \ \ \ \underline{\text{mod}}(kH).
\end{equation*}
In degree $0$ we have the Green correspondent of $S$, and up to degree $-2$ the terms of $X_i$ are projective; it remains to check the degree $-1$, namely:
\begin{equation*}
\bigoplus_{Q=Q_1,Q_2,Q_3} T_Q \otimes_{k\bar{C}_G(Q)} S_{N_G(Q)} \cong X_i^{-1} \ \ \ \text{in} \ \ \ \ \underline{\text{mod}}(kH),
\end{equation*}
and this holds by comparing the result of the algorithm above. 
\end{proof}

\section{Appendix A: the algorithm \texttt{FinalStabEq}} 

This algorithm aims to implement the construction of the stable equivalence described in \cite{Dav}. What we will actually build are the \emph{images} of the simple $B_0(G)$-modules $\mathcal{S}_{B_0(G)}$ under this stable equivalence; the algorithm is then meant to return the complexes of $B_0(H)$-modules described in \cite{Dav}. We recall the notation of \cite{Dav} that we have already introduced in Section \ref{decc}: we have a $kN_{\Delta}$-module $T_Q$ and a $kN_G(Q)$-module $S$; previously, $S$ denoted a $kG$-module, but as we need to restrict it to $N_G(Q)$ even before running the algorithm, we can directly regard it as an $kN_G(Q)$-module.
The tensor product $T_Q \otimes_{k\bar{C}_G(Q)} S$ has a natural structure of $N_{\Delta} \times N_G(Q)$-module, where $N_G(Q)$ acts trivially on $T_Q$ and  $N_{\Delta}$ acts trivially on $S$. Our construction involves $T_Q \otimes_{k\bar{C}_G(Q)} S$ as a $N_H(Q)$-module; this means that we consider the copy of $N_H(Q)$ embedded inside $N_{\Delta} \times N_G(Q)$ as described in \cite{Dav}, take $(T_Q \otimes S)_{N_H(Q)}$ and build the quotient $(T_Q \otimes S)_{N_H(Q)}/\langle R \rangle_{N_H(Q)} $, where $R=\{ct \otimes s - t \otimes cs \mid c \in \bar{C}_{G}(Q), t \in T_Q, s \in S \}$; here, the action of $\bar{C}_{G}(Q)$ on $T_Q$ is meant to be carried by the copy of $\bar{C}_{G}(Q)$ inside $N_{\Delta}$, and as for $S$ we have the action of $\bar{C}_{G}(Q)$ lying inside $N_G(Q)$. With an abuse of notation, we are implicitly using that $\bar{C}_{G}(Q)$ is fixed at the beginning as a subgroup of $G$, and then the expression $ct \otimes s - t \otimes cs$ is clear. 
The main difficulty of this algorithm is about how to build the set $R$. First, we notice that as we consider the $N_H(Q)$-span, we do not really need to construct each vector of the shape $ct \otimes s - t \otimes cs$, but we can restrict $t$ to the elements of a basis of $T_Q$, $s$ to the elements of a basis of $S$, and $c$ to a set of generators of $\bar{C}_G(Q)$, typically a set of two generators. However, as some $S$ have dimension in the thousands, the tensor product $T_Q \otimes S$ would have a prohibitive dimension, but we can skip this problem by remarking two facts:
\begin{enumerate}
\item $S$ is the restriction of a simple $kG$-module down to $N_G(Q)$; then, it is in general decomposable, and it will split in a number of indecomposable non-projective and projective summands: $S=S'_1 \oplus \dots \oplus S'_r \oplus P_1 \oplus  \cdots \oplus P_e$, where $\{ S'_i \}_{i=1}^{r}$ are non-projective and $\{ P_j \}_{j=1}^{e}$ are projective. Decomposing a module of dimension in the thousands can be hard, but in general it is easy to detect and delete (by quotienting out) all the projective summands - as a projective submodule is a summand - and end up with the non-projective part of $S$ only, which is in general very small. 
As the tensor product over a subalgebra is linear, we have:
\begin{equation*}
\ \ \ \ \ \ \ \ \ \ T_Q \otimes_{k\bar{C}_G(Q)} S= \left( \bigoplus_{i=1}^{r} T_Q  \otimes_{k\bar{C}_G(Q)} S'_i \right) \oplus  \left( \bigoplus_{j=1}^{e} T_Q  \otimes_{k\bar{C}_G(Q)} P_j \right).
\end{equation*}
This shows that we can focus on indecomposable modules $S$ only. A further decomposition can be carried when the module $T_Q$ is not indecomposable. Moreover, we realise that it is convenient to compute $T_Q  \otimes_{k\bar{C}_G(Q)} P_j$ at the beginning once and for all, so the contribution of the projective part of $S$ to $T_Q \otimes_{k\bar{C}_G(Q)} S$ is immediately known as soon as we have the decomposition of $S$. 
\item  Now we have to find $T_Q \otimes_{k\bar{C}_G(Q)} S'$, where $S'$ is indecomposable. The summands $S'$ of $S$ will often be small enough to proceed with the direct computation, but sometimes not. Although $S'$ is now indecomposable, we notice that in order to get vectors $ct \otimes s - t \otimes cs, \ t \in T_Q, s \in S'$, we only care about the action of $\bar{C}_G(Q)$. So in a computational setting, we can restrict both $T_Q$ and $S'$ further down to $\bar{C}_{G}(Q)$. For example, if the decomposition of $T_Q$ as a $k\bar{C}_G(Q)$-module is $(T_Q)_{\bar{C}_G(Q)}=T_1 \oplus \dots \oplus T_m$ for some $m \geq 1$, then a basis of $T_Q$ as a vector space can be chosen as the union of basis for each subspace $T_1, \dots, T_m$; the massive computational advantage is that an arbitrary element $t$ of the basis of $T_Q$ can now be seen as a vector of some $T_j$, for $j=1,\dots, m$, which is remarkably smaller and so the matrix-vector multiplications $t \cdot c$ is done almost immediately in each case that we considered. As a vector in $T_j$, then $t\cdot c$ can be easily coerced inside $T_Q$ and tensored with $s$; the same argument applies to the $\bar{C}_G(Q)$-summands of $S'$.
\end{enumerate}
This method allows us to build the term in degree $-1$ which is supposed to come out from the image of the simple $B_0(G)$-modules under our stable equivalence. The algorithm is mostly based on three parts. First of all, for a given $kN_G(Q)$-module $S$, we detect all the indecomposable summands and their multiplicities - as using the command \texttt{IndecomposableSummands()} is not the best option when $S$ has dimension in some thousands. Given $S$ and the list of indecomposable projective $kN_G(Q)$-modules, the following returns a list recording how many times each projective appears as a summand of $S$, and a module being a copy of $S$ without its projective summands. In the following algorithm, we make use of \texttt{RemFree}, that we have not copied here; this take a module $M$, a positive number $n$, and for $n$ times it tries to generate a free submodule in $M$ to quotient by. If $n$ is large enough, it quotients $M$ by enough free summand (a free submodule is a summand), we ultimately get the non-free part of $M$ as an output. 

\begin{verbatim}
function SplitL(M,LP);
/* How many times should we try to look for free summands? 
The potential number is Dim(M) div #Group(M), the greatest integer 
less than or equal to Dim(M)/#G. As RemFree can fail, 
we will check two times this number.  */
nf:=Dimension(M) div #Group(M);
if not (nf eq 0) then 
   T:=RemFree(M,2*nf);
   else T:=M;
end if;
/* c tells me how many free summands we have removed from M */
c:=(Dimension(M)-Dimension(T)) div #Group(M);
/* LN is a list of integers. It will track how many times each 
projective is found inside M, and will be returned in the end. */
if c eq 0 then 
   LN:=[0 : x in LP];
   else LN:=[c*Dimension(Socle(x)) : x in LP];
end if;
/* Now we focus on T, to find the remaining projective summands */
for k in [1..#LP] do   
    B,n:=CountProj(T,LP[k]); delete T; T:=B; delete B;
    LN[k]:=LN[k]+n; delete n; 
end for;
return T,LN;
end function;
\end{verbatim}

Given the finite group $G$, the $\ell$-local subgroup $H$ (which will always be the normaliser of a Sylow $\ell$-subgroup), a cyclic group $Q$ of order $\ell$ contained in $H$ and its normaliser $N_G(Q)$ - that we denote in the code as NG - the following \texttt{StableEqSetup} returns the $kN_{\Delta}$-module $V=k \bar{C}_G(Q)$, which will provide, as it is described in \cite{Dav}, our module $T_Q$. Moreover, the code returns the groups denoted as BCG, IBCG, NH, BNH, IBNH, IBCH; they are, respectively, a copy of $\bar{C}_G(Q)$ in $N_G(Q)$, a copy of $\bar{C}_G(Q)$ in $N_{\Delta}$, a copy of $N_H(Q)$ and $\bar{N}_H(Q)$ inside $N_G(Q)$, a copy of $\bar{N}_H(Q)$ inside $N_{\Delta}$, and a copy of $\bar{C}_G(Q)$ inside $N_{\Delta}$. We do not need that the code returns the group $N_{\Delta}$ as well, as it is already carried by $V$, and it is easily recovered by using the command \texttt{Group()}. Each of these group is returned as generated by two elements (we assume that this is always possible), and for this purpose we use the short function \texttt{GenTwoEl}. The sole reason why we prefer to turn the set of generators of such groups into a set of two elements is based on shorter calculations. Finally, $i$ consists of both the embeddings of $\bar{C}_H(Q)$ and $\bar{C}_G(Q)$ inside $N_{\Delta}$. 

\begin{verbatim}
function StableEqSetup(G,H,NG,Q);
/* Here we define all groups and subgroups that are involved 
in the construction of the stable equivalence.
We make sure that each subgroup is generated by two elements. */ 
NH:=Normaliser(H,Q);
NH:=GenTwoEl(NH);
CG:=Centraliser(G,Q);
CG:=GenTwoEl(CG);
CH:=Centraliser(H,Q);
CH:=GenTwoEl(CH);
BNH:=Complements(NH,Q)[1];
BNH:=GenTwoEl(BNH);
BNG:=Complements(NG,Q)[1];
BNG:=GenTwoEl(BNG);
/* As requested by the algorithm, BNH must be contained in BNG */ 
repeat 
      g:=Random(NG); BNG:=Conjugate(BNG,g); 
until BNH subset BNG;
BCH:=CH meet BNH; 
BCH:=GenTwoEl(BCH);
BCG:=CG meet BNG;
BCG:=GenTwoEl(BCG);
D,i,p:=DirectProduct(NH,NG);
ND:=sub<D|i[1](BCH.1),i[1](BCH.2),i[2](BCG.1),i[2](BCG.2),
i[1](BNH.1)*i[2](BNH.1),i[1](BNH.2)*i[2](BNH.2)>;
DP:=sub<ND|i[1](BCH.1),i[1](BCH.2),i[2](BCG.1),i[2](BCG.2)>;
IBNH:=sub<ND|i[1](BNH.1)*i[2](BNH.1),i[1](BNH.2)*i[2](BNH.2)>;
IBCH:=sub<ND|i[1](BCH.1),i[1](BCH.2)>;
IBCG:=sub<ND|i[2](BCG.1),i[2](BCG.2)>;
/* We can now define the k[BCH]-k[BCG] bimodule k[BCG] */ 
LG:=[g : g in IBCG];
n:=#IBCG;
k:=GF(#Q); 
Zg1:=ZeroMatrix(k,n,n);
for i in LG do 
       Zg1[Position(LG,i),Position(LG,i*IBCG.1)]:=1; 
end for;
Zg2:=ZeroMatrix(k,n,n);
for i in LG do 
       Zg2[Position(LG,i),Position(LG,i*IBCG.2)]:=1; 
end for;
Zh1:=ZeroMatrix(k,n,n);
for j in LG do 
       Zh1[Position(LG,j),Position(LG,i[2](BCH.1^(-1))*j)]:=1; 
end for;
Zh2:=ZeroMatrix(k,n,n);
for j in LG do 
      Zh2[Position(LG,j),Position(LG,i[2](BCH.2^(-1))*j)]:=1; 
end for;
/* Here we define the action of \bar{N_H(Q)}, so kC_G(Q) is a module 
for the whole N_{\Delta} */ 
Zn1:=ZeroMatrix(k,n,n);
for i in LG do 
       Zn1[Position(LG,i),Position(LG,(IBNH.1)^(-1)*i*IBNH.1)]:=1; 
end for;
Zn2:=ZeroMatrix(k,n,n);
for i in LG do 
       Zn2[Position(LG,i),Position(LG,(IBNH.2)^(-1)*i*IBNH.2)]:=1; 
end for;
/* N_{\Delta} has 6 generators: 2 for C_H(Q), 2 for C_G(Q), 
and 2 for the diagonal \bar{N_H(Q)} */ 
V:=GModule(ND,[Zh1,Zh2,Zg1,Zg2,Zn1,Zn2]); 
return V,BCG,IBCG,NH,BNH,IBNH,i,IBCH;
end function;
\end{verbatim}

The following algorithm is the main one. This will be used to compute $T_Q \otimes_{k\bar{C}_G(Q)} S'$; we will typically run it when $S'$ is indecomposable as a $k N_G(Q)$-module.

\begin{verbatim}
function StableEquivalence(Tq,V,H,Q,BCG,IBCG,NH,BNH,IBNH,i);
ND:=Group(Tq);
NG:=Group(V);
Gamma,ii,pp:=DirectProduct(ND,NG);
g:=NH.1;
for x in BNH do
    if x*g^(-1) in Q then y1:=x;
    end if;
end for;  
g:=NH.2;
for x in BNH do
    if x*g^(-1) in Q then y2:=x;
    end if;
end for;  
s:=hom< NH -> IBNH|i[1](y1)*i[2](y1),i[1](y2)*i[2](y2)>;
/* s is the "quotient" map of NH onto the diagonal copy of BNH 
inside ND=N_{Delta}. */ 
/* x1, x2 generate N_H(Q) inside Gamma, and we recall that 
ii is the embedding of N_{Delta} and N_G(Q) inside Gamma. */
x1:=ii[1](s(NH.1))*ii[2](NH.1);
x2:=ii[1](s(NH.2))*ii[2](NH.2);
/* Finally, the copy of N_H(Q) which is diagonally 
embedded inside Gamma: */
NNH:=sub<Gamma|x1,x2>;
k:=Field(Tq);
/* We have V, which is a N_G(Q)-mod, and now we provide it with the 
(trivial) action of the other factor of Gamma, i.e. N_{Delta}. */
d:=Dimension(V);
IdV:=IdentityMatrix(k,d);
a:=ActionGenerators(V);
NewV:=GModule(Gamma,[IdV,IdV,IdV,IdV,IdV,IdV,a[1],a[2]]); delete a;
/* We have T_q now, which is a N_{Delta}-mod, and we give it the 
(trivial) action of the other factor of Gamma, i.e. N_G(Q). */
d:=Dimension(Tq);
IdTq:=IdentityMatrix(k,d);
a:=ActionGenerators(Tq);
NewTq:=GModule(Gamma,[a[1],a[2],a[3],a[4],a[5],a[6],IdTq,IdTq]); 
delete a;
/* Generators of the centraliser. 
We need them for the relations that we quotient by. */
a1:=ii[1](IBCG.1)^(-1);
b1:=ii[2](BCG.1);
a2:=ii[1](IBCG.2)^(-1);
b2:=ii[2](BCG.2);
Ten:=TensorProduct(NewTq,NewV); 
ListT1:=[]; ListT2:=[];
ListV1:=[]; ListV2:=[];
ResTq:=Restriction(NewTq,ii[1](IBCG));
ResV:=Restriction(NewV,ii[2](BCG));
IT:=IndecomposableSummands(ResTq);
print "\nRestricted to the Q-complement of CG(Q), the module Tq 
decomposes into", #IT, "summands of dimension:"; 
l:=[];
for x in IT do Append(~l,Dimension(x));
end for;
l;
IV:=IndecomposableSummands(ResV);
print "\nRestricted to the Q-complement of CG(Q), the module S 
decomposes into", #IV, "summands of dimension:"; 
l:=[];
for x in IV do Append(~l,Dimension(x));
end for;
l;
"\nNow Tq and L have been decomposed as much as possible, 
namely the action is restricted to the Q-complement of CG(Q).";
NewBasisTq:=[];
/* We create vectors of the shape tg*l-t*gl, where * is 
tensor product. We make two lists, i.e. vectors t*g's and g*l's. */
for C in IT do
    basC:=Basis(C);
    NewBasisTq:=NewBasisTq cat [NewTq!(ResTq!v) : v in basC];
    LC1:=[NewTq!(ResTq!(v*a1)) : v in basC];
    LC2:=[NewTq!(ResTq!(v*a2)) : v in basC];
    ListT1:=ListT1 cat LC1;
    ListT2:=ListT2 cat LC2;
end for;
"Done with Tq.";
NewBasisV:=[];
for D in IV do
    basD:=Basis(D);
    NewBasisV:=NewBasisV cat [NewV!(ResV!v) : v in basD];
    LD1:=[NewV!(ResV!(v*b1)) : v in basD];
    LD2:=[NewV!(ResV!(v*b2)) : v in basD];
    ListV1:=ListV1 cat LD1;
    ListV2:=ListV2 cat LD2;
end for;
"Done with S, we have our vectors in Tq and S, now we tensor them.";  
/* ListT1, ListT2 are coerced vectors in NewTq; ListV1, ListV2 are 
vectors of NewV. Now we tensor them, so we get our set of desired 
vectors in NewTq x NewV, namely Ten */
ListTen1:=[];
ListTen2:=[];
m:=0;
for i in [1..#ListT1] do 
    for j in [1..#NewBasisV] do 
        Append(~ListTen1,Ten!Vector((TensorProduct(ListT1[i],NewBasisV[j])-
        TensorProduct(NewBasisTq[i],ListV1[j]))));       
        m:=m+1; 
        if (m mod 1000) eq 0 then  
           "We have tensored", m, "vectors out of", 2*#ListT1*#NewBasisV;  
        end if;
    end for;
end for;
for i in [1..#ListT2] do 
    for j in [1..#NewBasisV] do 
        Append(~ListTen2,Ten!Vector((TensorProduct(ListT2[i],NewBasisV[j])-
        TensorProduct(NewBasisTq[i],ListV2[j]))));
        m:=m+1; 
        if (m mod 1000) eq 0 then  
           "We have tensored", m, "vectors out of", 2*#ListT1*#NewBasisV;  
        end if;
    end for;
end for;
"\nNow we generate our submodule, quotient, clean off projectives, 
and return the final kN(D)-module.";
ListFinal:=ListTen1 cat ListTen2;
"Now we restrict the tensor product to N_H(Q), 
its dimension is", Dimension(Ten); 
Ten:=Restriction(Ten,NNH);
Rel:=sub<Ten|ListFinal>;
Xs:=Ten/Rel;
r:=Representation(Xs);
_,f:=IsIsomorphic(NNH,Group(Xs));
U:=GModule(NH,[r(f(NNH.1)),r(f(NNH.2))]); 
p:=#Q;
ProjU:=[ProjectiveCover(x) : x in IrreducibleModules(Group(U),GF(p))];
n:=Dimension(U) div #Group(U);
U:=RemFree(U,2*n);
U:=RemoveAllProj(U,ProjU);
IV:=Induction(U,H);
return IV;
end function;
\end{verbatim}

The final algorithm aims to iterate the previous algorithm \texttt{StableEquivalence} over each indecomposable summands of the $k N_G(Q)$-module $S$. We will use \texttt{SplitL} first and we will process the non-projective part of $S$ first, as most of the times the projective summands have been already processed in a previous case and there is no need to redo the calculation. The list of projective indecomposable $kN_G(Q)$-modules is ProjNG. Whether we want to process the projective summands of $S$ as well or not, it is decided by the input ``bool''. 
\begin{verbatim}
function FinalStabEq(Tq,S,H,Q,BCG,IBCG,NH,BNH,IBNH,i,ProjNG,bool);
/* Here bool decides if we have to compute the tensor of Tq with the 
projective summands of S as well. Sometimes, we already know those, 
as it was done before, and we do not have to do the same computation 
again, in this case we set bool=false. */
NG:=Group(S);
T,LN:=SplitL(S,ProjNG);
/* Let us count how many summands S splits into. 
We will print this result on screen. */ 
c:=0; 
NonZero:=[[Dimension(T),1]];
/* Let us remember that LN is the list of multiplicities of 
indecomposable projective inside L. 
The index h runs across the total number of projectives. */ 
for h in [1..#ProjNG] do 
    if not (LN[h] eq 0) then 
       c:=c+1; Append(~NonZero,[Dimension(ProjNG[h]),LN[h]]);
    end if;
end for;
print "\nThe kN(Q)-module decomposes into summands 
of dimension (with multiplicities):"; 
for x in NonZero do x;
end for;
/* First, we find the desired tensor of Tq with the non-projective 
part of L. We will add the "projective" part later. */ 
print "\nWe work on the tensor no", 1, "out of", #NonZero;
U:=StableEquivalence(Tq,T,H,Q,BCG,IBCG,NH,BNH,IBNH,i);
if not bool then 
   return U;
end if;
/* Whenever bool=true, we go on and now we sum the contribution 
coming from the projective summands of L. */
num:=2;
for j in [1..#ProjNG] do 
    if not (LN[j] eq 0) then
       print "\nWe work on the tensor no", num, "out of", #NonZero;   
       StEq:=StableEquivalence(Tq,ProjNG[j],H,Q,BCG,IBCG,NH,BNH,IBNH,i);
       for k in [1..LN[j]] do 
           U:=DirectSum(U,StEq);
       end for;
    num:=num+1;
    end if;
end for;
return U;
end function;

\end{verbatim}

\section{Appendix B: the algorithm \texttt{PerverseEq}} 

We recall that \texttt{PerverseEq} is the algorithm taking a $B_0(H)$-module $T_i$ as input, and constructing a complex $X_i$ which is meant to be the image of $T_i$ under a perverse derived equivalence, provided that the conditions of Proposition \ref{forestanera} are fulfilled.

\vspace{3mm}

For a $kG$-module $U$ and a list $X$ of simple $kG$-modules, the following algorithm returns the maximal \emph{semisimple} submodule $V \subseteq U$ with composition factors in the list $X$; notice that the set of composition factors of the zero-module is the empty subset of $X$, and therefore such a submodule always exists. 
\begin{verbatim}
function SemisimpleXRad(Q,X);
K:=[];
for M in X do
    hom:=AHom(M,Q);
    if Dimension(hom) gt 0 then
       B:=&+[Image(hom.j) : j in [1..Dimension(hom)]];
       K:=Append(K,B);
    end if;
end for;
if #K eq 0 then
   return sub<Q|0>;
else
   T:=&+K; return T;
end if;
end function;
\end{verbatim}
The following is a straight application of the previous one. Given a list of simple $kG$-modules $X$, a $kG$-module $U$ and a submodule $V$, the function returns $W$ such that $V \subseteq W \subseteq U$ and $W/V$ is the $X$-radical of $U/V$. This is equivalent to saying that $W$ is the maximal submodule such that $V \subseteq W \subseteq U$ and there is a filtration from $V$ to $W$ whose quotients are in $X$.
\begin{verbatim}
function PreImageXRadical(P,M,X);
Q,q:=P/M; N:=M;
_,R:=HasPreimage(SemisimpleXRad(Q,X),q);
/* If N equal R, we do not enter the loop. Indeed, 
it means that there is nothing acceptable between M and P, 
so it returns M itself as M/M is considered to be semisimple */
while Dimension(N) lt Dimension(R) do
      N:=R; Q,q:=P/N; 
      _,R:=HasPreimage(SemisimpleXRad(Q,X),q);
end while;
return R;
end function;
\end{verbatim}
\vspace{2mm}
Now let $n \in \ZZ_{\geq 0}$ and $p: \mathcal{S}_{B_0(H)} \to \ZZ_{\geq 0}$. Here we get the set $J_n$ defined in \ref{algopi}.
\vspace{2mm}
\begin{verbatim}
function J(X,n,p);
I:={};
for M in X do
    if p(M) le n then
       I:=Include(I,M);
    end if;
end for;
return I;
end function;
\end{verbatim}
\vspace{2mm}
The following returns the injective hull of a $kG$-module $M$ equipped with an injective map.
\vspace{2mm}
\begin{verbatim}
function InjHull(M);
IM:=Dual(ProjectiveCover(Dual(M))); h:=AHom(M,IM);
repeat f:=Random(h); 
until  IsInjective(f);
return IM,f;
end function;
\end{verbatim}
\vspace{2mm}

We are now finally able to build the algorithm $\texttt{PerverseEq()}$. We recall that this algorithm will allow us to deduce if Rouquier's stable equivalence can be lifted to a perverse derived equivalence with a given bijections between $B_0(H)$ and $B_0(H)$ and a given perversity function. Given $T_i \in \mathcal{S}_{B_0(H)}$, we remark that $\texttt{PerverseEq()}$ is physically building the complex $X_i$ from degree $-n$ to degree $-2$, as well as a submodule of the term of degree $-1$, which is meant to be the kernel of the last map of the complex; the crucial term in degree $-1$ can be built manually using the MAGMA commands $\texttt{Ext}()$ and $\texttt{Extension}()$ as we explained in Section \ref{saa}, or using the code \texttt{FindP1}. We also recall that if the algorithm is successful for every simple $B_0(H)$-module $T_i$, then the complex $X_i$ is the image of $S_i \in \mathcal{S}_{B_0(H)}$ under the perverse equivalence between $\mathcal{D}(B_0(G))$ and $\mathcal{D}(B_0(H))$ that we have just found. 

Hence, $T$ is a simple $B_0(H)$-module, $X$ denotes $\mathcal{S}_{B_0(H)}$ and $p: X = \mathcal{S}_{B_0(H)}\to \ZZ_{\geq 0}$ is a (perversity) function. Sequences of the kernels, images and cohomologies are also returned. 

\begin{verbatim}
function PerverseEq(T,p,X);
if p(T) eq 0 then
   return "The complex is trivial, T->0";
end if;
n:=p(T); P:=[]; K:=[]; I:=[];
P[1],i:=InjHull(T); T:=Image(i);
K[1]:=PreImageXRadical(P[1],T,L(X,n-1,p));
for r in [2..n] do
    B:=P[r-1]/K[r-1];
    P[r],i:=InjHull(B);
    I[r-1]:=Image(i);
    Q,q:=P[r]/I[r-1];
    K[r]:=PreImageXRadical(P[r],I[r-1],L(X,n-r,p));   
end for;
H:=[K[1]];
for r in [2..n] do
    Append(~H,K[r]/I[r-1]);
end for;
P:=Prune(P);
return P,K,H,I;
end function;
\end{verbatim}

This algorithm builds the complex $X_T$ up to degree $-2$, as well as the submodule $M_1$ of the module in degree $-1$ (which is still unknown) $P_1$; it will be the kernel of the map $P_1 \to P_0$. We need to build $P_1$ and $P_0$. As we explained in Section \ref{atee}, $P_1$ must be an extension of $M_1$ by $C_S$, where $S$ corresponds to $T$ via the bijection between $\mathcal{S}_{B_0(H)}$ and $\mathcal{S}_{B_0(G)}$, and $C_S$ is the Green correspondent of $S$ down to $H$. Such extension must be the direct sum of stacked relatively $Q$-projective modules for some $Q<P$. MAGMA provides the function \texttt{Ext} to compute the space $\text{Ext}^1(C_S,P_1)$. Each vector $v \in \text{Ext}^1(C_S,P_1)$ provides an extension
 $E_v$ of $P_1$ by $C_S$, namely $P_1 \subset E_v$ and $E_v/P_1 \cong C_S$. In MAGMA, we can access and use the vectors of $\text{Ext}^1(C_S,P_1)$ by defining the vector space and the map \texttt{V,r:=Ext(A,B)} and for a given vector \texttt{v:=Random(V)}, we create the extension $E_v$ by setting \texttt{E:=Extension(A,B,v,r)}. The algorithm \texttt{FindP1} works via the following stages:

\begin{itemize}
\item Each vector $v$ of the vector space Ext provides an extension of $P_1$ by $C_i$, and some of them will be isomorphic; for example, each of $2v, \dots, (\ell-1)v$ will provide the same extension $E_{v}$ that $v$ does. Therefore we reduce the set of vectors \texttt{V} to \texttt{Set} and we create the list of extensions $E_v$ for each $v$ in the new set. We reduce this list by using \texttt{IsomorphismClasses}, which deletes redundant isomorphic copies out of the list. 
\item We run over each $E$ of this list and we decompose it. We are only interested in extensions who decompose into projective or stacked relatively projective modules with respect to some $Q \simeq C_{\ell}$. Projective modules have dimension divisible by $|D|=\ell^2$, whereas in the second case they have dimension which is multiple of the dimension of the indecomposable relatively $Q$-projective modules of trivial source. This number is given as an input for the function (denoted by $n$; for $\Omega_{8}^{+}(2)$, we have $n=15$). We delete every $E$ whenever one of its indecomposable summand does not fulfil those requirements. This is generally enough to considerably reduce the set of potential extensions to check.  
\end{itemize}
In most cases, the vector space $V$ has dimension $1$ or $2$ (namely: $\ell$ or $\ell^2$ elements) and the following function is not actually necessary. However, in other cases (like some sporadic groups) $V$ can be larger, and therefore such function can be useful to detect if some valid extension exists at all or not. 
\begin{verbatim}
function FindP1(V,r,K,GC,n,l);
Set:=[];
Rubbish:=[V!0];
/* here we get rid of useless vector, i.e. multiples */ 
for v in V do 
    if not (v in Rubbish) then Append(~Set,v);
    end if;
    for i in [1..l-1] do 
        Append(~Rubbish,i*v);
    end for;
end for;
"We got rid of repetitions in V. Now we compute extensions.";
EE:=[Extension(GC,K,v,r) : v in Set]; 
"Extensions computed. Now we reduce it, removing isomorphic copies.";
EE:=IsomorphismClasses(EE);
"Done. Now we decompose them and store whatever has a suitable decomposition."; 
Decompos:=[];
for E in EE do 
    I:=IndecomposableSummands(E); t:=true;
    for M in I do 
        if not (IsProjective(M) or Dimension(M) mod n eq 0) then t:=false;
        end if;
    end for;
    if t eq true then Append(~Decompos,I); delete I; 
    end if;
end for;
return Decompos;
end function;
\end{verbatim}

\section{Acknowledgements} 
I would like to thank David Craven, who introduced me into the field of Modular Representation Theory and supervised my PhD work at the University of Birmingham. I am also grateful for the financial support that I have received from the Engineering and Physical Sciences Research Council. 

This article has been written during my stay at the University of Kaiserslautern; I want to thank Gunter Malle for giving me the opportunity to be here, as well as the research centre  SFB TRR 195 and the University of Kaiserslautern for the financial support. 

Finally, I thank the referee for the valuable corrections and suggestions that have made the article more clear.

\bibliographystyle{amsplain}

\end{document}